\theoremstyle{definition} 
\newtheorem{df}{Definition}[section] 
\theoremstyle{plain}            
\newtheorem{pro}[df]{Proposition}
\newtheorem{lem}[df]{Lemma}
\newtheorem{theo}[df]{Theorem}
\newtheorem{cor}[df]{Corollary}
\newcommand{\f}{\ensuremath{\varphi}}
\newcommand{\Bscr}{\ensuremath{\mathscr{B}}}
\newcommand{\Hscr}{\ensuremath{\mathscr{H}}}
\newcommand{\Mcal}{\ensuremath{\mathcal{M}}}
\newcommand{\nn}{\ensuremath{\mathbb{N}}}
\newcommand{\unit}{\ensuremath{\mathbf{1}}}
\newcommand{\Ca}{$C${\rm*}-algebra}      
\newcommand{\Csa}{$C${\rm*}-subalgebra}
\newcommand{\vNa}{von Neumann algebra}
\newcommand{\ifff}{if and only if}
\begin{document}

\begin{center}
{\Large \bf Star order and topologies on \vNa{}s}\\
{\large Martin Bohata\footnote{bohata@math.feld.cvut.cz}\\}
         \it Department of Mathematics, Faculty of Electrical Engineering,\\
        Czech Technical University in Prague, Technick\'a 2,\\ 
        166 27 Prague 6, Czech Republic        
\end{center}

{\small \textbf{Abstract:} 
The goal of the paper is to study a topology generated by the star order on von Neumann algebras. In particular, it is proved that the order topology under investigation is finer than $\sigma$-strong* topology. On the other hand, we show that it is comparable with the norm topology if and only if the von Neumann algebra is finite-dimensional.} 

{\small \textbf{AMS Mathematics Subject Classification:} 46L10; 06F30; 06A06} 

\section{Introduction}

In the order-theoretical setting, the notion of convergence of a net was introduced by G. Birkhoff \cite{Bi35,Bi67}. Let $(P,\leq)$ be a poset and let $x\in P$. If $(x_\alpha)_{\alpha\in\Gamma}$ is an increasing net in $(P,\leq)$ with the supremum $x$, we write $x_\alpha \uparrow x$. Similarly, $x_\alpha\downarrow x$ means that $(x_\alpha)_{\alpha\in\Gamma}$ is a decreasing net in $(P,\leq)$ with the infimum $x$. We say that a net $(x_\alpha)_{\alpha\in\Gamma}$ is {\it order convergent to $x$} in $(P,\leq)$ if there are nets $(y_\alpha)_{\alpha\in\Gamma}$ and $(z_\alpha)_{\alpha\in\Gamma}$ in $(P,\leq)$ such that $y_\alpha\leq x_\alpha\leq z_\alpha$ for all $\alpha\in\Gamma$, $y_\alpha\uparrow x$, and $z_\alpha\downarrow x$. If $(x_\alpha)_{\alpha\in\Gamma}$ is order convergent to $x$, we write $x_\alpha\stackrel{o}{\to}x$. It is easy to see that every net is order convergent to at most one point.

The order convergence determines a natural topology on a poset $(P,\leq)$ as follows. A subset $C$ of $P$ is said to be {\it order closed} if no net in $C$ is order convergent to a point in $P\setminus C$. The topology on a poset is called {\it order topology} if the family of all closed sets coincides with the family of all order closed sets. We shall denote the order topology of a poset $(P,\leq)$ by the symbol $\tau_o(P,\leq)$. It is easy to see that the order topology is the finest topology preserving order convergence (i.e. if $\tau$ is a topology on $(P,\leq)$ such that $x_\alpha\stackrel{o}{\to} x$ implies $x_\alpha\stackrel{\tau}{\to} x$, then $\tau\subseteq\tau_o(P,\leq)$). Since every one-point set is closed in $\tau_o(P,\leq)$, the topological space $(P,\tau_o(P,\leq))$ is $T_1$-space.

There are a number of papers dealing with the order topology, in particular on lattices. Lattices with the property that the order convergence coincides with the convergence in the order topology were studied, for example, in \cite{ER95,Gi76}. It was shown in \cite{FK54} that a normed linear space is reflexive \ifff{} the lattice of all its closed linear subspaces is Hausdorff (in the corresponding order topology). This interesting result has a direct consequence that the order topology is not, in general, Hausdorff. 

The order topology on the complete lattice of all projections on a Hilbert space was investigated in \cite{BChW12,Pa95}. A great progress in understanding of the order topologies on projection lattice and self-adjoint part of a von Neumann algebra (endowed with the standard order) was done in \cite{ChHW15}. It was shown that there is a strong connection between these topologies and locally convex topologies on von Neumann algebras. Motivated by this research, we shall study the order topology on various subsets of a von Neumann algebra endowed with the star order.

The rest of the paper is organized as follows. In the second section, we collect some basic facts on von Neumann algebras, star order, order convergence, and order topology. The third section deals with the existence of the suprema and infima in several subsets of a von Neumann algebra with respect to the star order. Moreover, we examine a relationship between suprema and infima of monotone nets and the strong operator limit of these nets. In the last section, we prove that if a net $(x_\alpha)_{\alpha\in\Gamma}$ order converges (with respect to the star order) to $x$, then it also converges to $x$ in $\sigma$-strong* topology. Thus the order topology is finer than $\sigma$-strong* topology. This result seems to be surprising because the star order is not translation invariant and so the order topology is far from being linear. Moreover, we show that the order topology is not comparable with norm topology unless the von Neumann algebra is finite-dimensional. Among other things, we also prove that, for every von Neumann algebra, the restriction $\tau_o(\Mcal_{sa},\preceq)|_{P(\Mcal)}$ of the order topology on self-adjoint part of a \vNa{} \Mcal{} to projection lattice coincides with the order topology $\tau_o(P(\Mcal),\preceq)$ on the projection lattice. This is in the contrast with the case of the order topology with respect to the standard order. It was shown in \cite[Proposition~2.9]{ChHW15} that $\tau_o(\Mcal_{sa},\leq)|_{P(\Mcal)}=\tau_o(P(\Mcal),\leq)$ if and only if the \vNa{} \Mcal{} is abelian.

\section{Preliminaries}

We say that a poset $(P,\leq)$ is {\it Dedekind complete} if every nonempty subset of $P$ that is bounded above has the supremum. A poset $(P,\leq)$ is Dedekind complete \ifff{} every nonempty subset of $P$ that is bounded below has the infimum. In the following lemma and proposition, we summarize the well known facts about the order convergence and order topology. We prove these results for convenience of the reader.

\begin{lem}\label{limsup}
Let $(P,\leq)$ be a poset. Assume that $(x_\alpha)_{\alpha\in\Gamma}$ is a net in $P$ and $x\in P$.
	\begin{enumerate}
		\item If $\alpha_0\in\Gamma$ is an arbitrary fixed element, $\Lambda=\{\alpha\in\Gamma|\,\alpha_0\leq \alpha\}$, and $(x_\alpha)_{\alpha\in\Gamma}$ is order convergent to $x$ in $(P,\leq)$, then $(x_\alpha)_{\alpha\in\Lambda}$ is (order) bounded and order convergent to $x$ in $(P,\leq)$.
		\item If $\liminf_\alpha x_\alpha=\limsup_\alpha x_\alpha=x$, then $(x_\alpha)_{\alpha\in\Gamma}$ is order convergent to $x$ in $(P,\leq)$.
		\item If $(P,\leq)$ is Dedekind complete and $(x_\alpha)_{\alpha\in\Gamma}$ is (order) bounded and order convergent to $x$ in $(P,\leq)$, 	then $\liminf_\alpha x_\alpha=\limsup_\alpha x_\alpha=x$.
	\end{enumerate}
\end{lem}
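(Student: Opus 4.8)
The plan is to deduce all three parts from two elementary observations about monotone nets. \emph{Observation A.} If $(w_\alpha)_{\alpha\in\Gamma}$ is an increasing net in $(P,\leq)$ and $\Lambda\subseteq\Gamma$ is cofinal, then a point of $P$ is an upper bound of $\{w_\alpha:\alpha\in\Lambda\}$ if and only if it is an upper bound of $\{w_\alpha:\alpha\in\Gamma\}$ (given $\gamma\in\Gamma$, choose $\lambda\in\Lambda$ with $\gamma\leq\lambda$ and use monotonicity); hence $\sup_{\alpha\in\Lambda}w_\alpha=\sup_{\alpha\in\Gamma}w_\alpha$ in the sense that one exists iff the other does and then they coincide, and dually for decreasing nets and infima. \emph{Observation B.} For an order-bounded net $(x_\alpha)_{\alpha\in\Gamma}$ in a Dedekind complete poset the elements $a_\alpha:=\inf_{\beta\geq\alpha}x_\beta$ and $b_\alpha:=\sup_{\beta\geq\alpha}x_\beta$ all exist, $(a_\alpha)_{\alpha\in\Gamma}$ is increasing, $(b_\alpha)_{\alpha\in\Gamma}$ is decreasing, $a_\alpha\leq x_\alpha\leq b_\alpha$ for every $\alpha$, and by definition $\liminf_\alpha x_\alpha=\sup_\alpha a_\alpha$, $\limsup_\alpha x_\alpha=\inf_\alpha b_\alpha$ (existence of either side forcing the other).

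For (i), pick nets $(y_\alpha)_{\alpha\in\Gamma}$ and $(z_\alpha)_{\alpha\in\Gamma}$ witnessing $x_\alpha\stackrel{o}{\to}x$, so that $y_\alpha\leq x_\alpha\leq z_\alpha$, $y_\alpha\uparrow x$, and $z_\alpha\downarrow x$. Since $\Gamma$ is directed, $\Lambda=\{\alpha\in\Gamma\mid\alpha_0\leq\alpha\}$ is cofinal in $\Gamma$, so by Observation~A we have $(y_\alpha)_{\alpha\in\Lambda}\uparrow x$ and $(z_\alpha)_{\alpha\in\Lambda}\downarrow x$; together with $y_\alpha\leq x_\alpha\leq z_\alpha$ on $\Lambda$ this shows $(x_\alpha)_{\alpha\in\Lambda}$ is order convergent to $x$. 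Moreover $y_{\alpha_0}\leq y_\alpha\leq x_\alpha\leq z_\alpha\leq z_{\alpha_0}$ for every $\alpha\in\Lambda$ by monotonicity of $(y_\alpha)$ and $(z_\alpha)$, so $(x_\alpha)_{\alpha\in\Lambda}$ is order bounded.

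For (ii), the hypothesis that $\limsup_\alpha x_\alpha$ exists entails that $b_\alpha=\sup_{\beta\geq\alpha}x_\beta$ is defined for every $\alpha$ and that $(b_\alpha)_{\alpha\in\Gamma}$ has infimum $\limsup_\alpha x_\alpha=x$; thus $b_\alpha\downarrow x$ and $x_\alpha\leq b_\alpha$. Dually $a_\alpha=\inf_{\beta\geq\alpha}x_\beta$ is defined for every $\alpha$, $a_\alpha\uparrow x$, and $a_\alpha\leq x_\alpha$. Hence $(a_\alpha)_{\alpha\in\Gamma}$ and $(b_\alpha)_{\alpha\in\Gamma}$ witness $x_\alpha\stackrel{o}{\to}x$.

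For (iii), Dedekind completeness together with order boundedness guarantees (Observation~B) that $a_\alpha$, $b_\alpha$, and the quantities $\liminf_\alpha x_\alpha=\sup_\alpha a_\alpha$, $\limsup_\alpha x_\alpha=\inf_\alpha b_\alpha$ all exist, so it remains only to identify them with $x$. Fix witnesses $(y_\alpha)_{\alpha\in\Gamma}$, $(z_\alpha)_{\alpha\in\Gamma}$ of $x_\alpha\stackrel{o}{\to}x$. For $\beta\geq\alpha$ we have $x_\beta\leq z_\beta\leq z_\alpha$, so $z_\alpha$ is an upper bound of $\{x_\beta\mid\beta\geq\alpha\}$ and therefore $b_\alpha\leq z_\alpha$; taking infima over $\alpha$ gives $\limsup_\alpha x_\alpha\leq\inf_\alpha z_\alpha=x$. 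Conversely $y_\beta\leq x_\beta$ for all $\beta$, and $\{\beta\in\Gamma\mid\beta\geq\alpha\}$ is cofinal in $\Gamma$, so by Observation~A, $b_\alpha=\sup_{\beta\geq\alpha}x_\beta\geq\sup_{\beta\geq\alpha}y_\beta=\sup_{\beta\in\Gamma}y_\beta=x$ for every $\alpha$, whence $\limsup_\alpha x_\alpha=\inf_\alpha b_\alpha\geq x$. Thus $\limsup_\alpha x_\alpha=x$, and the argument for $\liminf_\alpha x_\alpha=x$ is dual. No step is deep; the only points demanding care are the cofinality argument in Observation~A (restricting a monotone net to a cofinal set must not alter its supremum or infimum) and keeping track of where Dedekind completeness and order boundedness are used, so that every iterated supremum and infimum is defined before it is manipulated.
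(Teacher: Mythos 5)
Your proof is correct and follows essentially the same route as the paper's: part (i) by restricting the witnessing monotone nets to the cofinal tail $\Lambda$ (your Observation A is exactly the paper's "upper bound iff upper bound" remark), part (ii) by taking the tail infima and suprema as the witnessing nets, and part (iii) by sandwiching $\inf_{\beta\geq\alpha}x_\beta$ and $\sup_{\beta\geq\alpha}x_\beta$ between $y_\alpha$ and $z_\alpha$. The only cosmetic difference is that in (iii) you pin down $\limsup$ and $\liminf$ by two separate inequalities rather than the paper's single chain $x=\sup_\alpha y_\alpha\leq\sup_\alpha\inf_{\beta\geq\alpha}x_\beta\leq\inf_\alpha\sup_{\beta\geq\alpha}x_\beta\leq\inf_\alpha z_\alpha=x$.
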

	\begin{proof}\hfill{}
			\begin{enumerate}
					\item Suppose that $\alpha_0\in\Gamma$ is an arbitrary fixed element and $\Lambda=\{\alpha\in\Gamma|\,\alpha_0\leq \alpha\}$. If $(x_\alpha)_{\alpha\in\Gamma}$ is order convergent to $x$ in $(P,\leq)$, then there are nets $(y_\alpha)_{\alpha\in\Gamma}$ and $(z_\alpha)_{\alpha\in\Gamma}$ such that $y_\alpha\leq x_\alpha\leq z_\alpha$ for all $\alpha\in\Gamma$, $y_\alpha\uparrow x$, and $z_\alpha\downarrow x$. Hence $y_\alpha\leq x_\alpha\leq z_\alpha$ for all $\alpha\in\Lambda$. Moreover, since $u\in P$ is an upper bound of $(y_\alpha)_{\alpha\in\Gamma}$ \ifff{} $u$ is an upper bound of $(y_\alpha)_{\alpha\in\Lambda}$, we see that the net $(y_\alpha)_{\alpha\in\Lambda}$ satisfies $y_\alpha\uparrow x$. Similarly, we prove that the net $(z_\alpha)_{\alpha\in\Lambda}$ satisfies $z_\alpha\downarrow x$. Therefore, the net $(x_\alpha)_{\alpha\in\Lambda}$ is order convergent to $x$ in $(P,\leq)$. Since $y_{\alpha_0}\leq y_\alpha\leq x_\alpha\leq z_\alpha\leq z_{\alpha_0}$ for all $\alpha\in\Lambda$, the net $(x_\alpha)_{\alpha\in\Lambda}$ is bounded.
				
					\item If $\liminf_\alpha x_\alpha=\limsup_\alpha x_\alpha=x$, then we set $z_\alpha=\sup_{\alpha\leq\beta} x_\beta$ and $y_\alpha=\inf_{\alpha\leq\beta} x_\beta$ for all $\alpha\in\Gamma$. It is obvious that $y_\alpha\leq x_\alpha\leq z_\alpha$ for all $\alpha\in\Gamma$, $y_\alpha\uparrow x$, and $z_\alpha\downarrow x$ which shows that $x_\alpha\stackrel{o}{\to} x$. 
					
					\item If $x_\alpha\stackrel{o}{\to} x$, then $y_\alpha\leq x_\alpha\leq z_\alpha$ for all $\alpha\in\Gamma$, $y_\alpha\uparrow x$, and $z_\alpha\downarrow x$. We observe that $\inf_{\alpha\leq\beta}x_\beta$ and $\sup_{\alpha\leq\beta}x_\beta$ exist for all $\alpha\in\Gamma$ because $(P,\leq)$ is Dedekind complete. By the boundedness of $(x_\alpha)_{\alpha\in\Gamma}$, the nets $(\sup_{\alpha\leq\beta} x_\beta)_{\alpha\in\Gamma}$ and $(\inf_{\alpha\leq\beta} x_\beta)_{\alpha\in\Gamma}$ are bounded. The Dedekind completeness of $(P,\leq)$ ensures that $\sup_{\alpha\in\Gamma}\inf_{\alpha\leq\beta} x_\beta$ and $\inf_{\alpha\in\Gamma}\sup_{\alpha\leq\beta} x_\beta$ exist. As 
					$$
					y_\alpha\leq\inf_{\alpha\leq\beta} x_\beta\leq x_\alpha\leq\sup_{\alpha\leq\beta} x_\beta\leq z_\alpha
					$$ 
					for all $\alpha\in\Gamma$, we have 
					$$
					x=\sup_{\alpha\in\Gamma}y_\alpha\leq \sup_{\alpha\in\Gamma}\inf_{\alpha\leq\beta} x_\beta\leq \inf_{\alpha\in\Gamma}\sup_{\alpha\leq\beta} x_\beta
						\leq\inf_{\alpha\in\Gamma} z_\alpha=x.
					$$
					This means that $\liminf_\alpha x_\alpha=\limsup_\alpha x_\alpha=x$.
			\end{enumerate}
	\end{proof}

\begin{pro}[{\cite[Proposition 2.3]{ChHW15}}]\label{Dedekind}
Let $(P,\leq)$ be a Dedekind complete poset and let $P_0\subseteq P$ be closed in $\tau_o(P,\leq)$. If the supremum of every nonempty subset of $P_0$ with an upper bound in $P$ belongs to $P_0$, then $\tau_o(P,\leq)|_{P_0}=\tau_o(P_0,\leq)$.
\end{pro}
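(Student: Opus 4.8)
The plan is to show the two topologies have the same closed sets, i.e. for $C \subseteq P_0$, $C$ is order closed in $(P_0,\leq)$ if and only if $C$ is order closed in $(P,\leq)$. Since $\tau_o(P,\leq)|_{P_0}$ has as its closed sets exactly the sets of the form $C' \cap P_0$ with $C'$ closed in $\tau_o(P,\leq)$, and since intersections of order closed sets are order closed, it suffices to prove: (a) every set order closed in $(P,\leq)$ meets $P_0$ in a set order closed in $(P_0,\leq)$ — this direction is essentially automatic, because a net in $C \cap P_0$ that order converges in $(P_0,\leq)$ to some $x \in P_0$ also order converges to $x$ in $(P,\leq)$ (the sandwiching nets $y_\alpha, z_\alpha$ live in $P_0 \subseteq P$ and their sup/inf computed in $P_0$ must be checked to agree with those in $P$); and (b) every set $C$ order closed in $(P_0,\leq)$ is of the form $C' \cap P_0$ for some $C'$ order closed in $(P,\leq)$ — the natural candidate is $C' = $ the order closure of $C$ in $(P,\leq)$, and one must show $C' \cap P_0 = C$.

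The real content is in two auxiliary facts about how order convergence in $P_0$ relates to order convergence in $P$, under the hypothesis that $P_0$ absorbs suprema of its bounded-above nonempty subsets. First I would record that, because $(P,\leq)$ is Dedekind complete and $P_0$ is closed under such suprema, $P_0$ is itself Dedekind complete and, crucially, suprema and infima of bounded nonempty subsets of $P_0$ computed inside $P_0$ coincide with those computed in $P$: for suprema this is immediate from the absorption hypothesis; for infima one uses that the infimum of a bounded-below set $S \subseteq P_0$ equals the supremum in $P_0$ of the set of its lower bounds lying in $P_0$, and one checks this set has the right sup in $P$ too. Consequently, for a \emph{bounded} net in $P_0$, order convergence in $(P_0,\leq)$ and in $(P,\leq)$ are equivalent, by Lemma \ref{limsup}(2)--(3) applied in both posets together with the agreement of $\liminf$ and $\limsup$. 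The passage to bounded nets is legitimate by Lemma \ref{limsup}(1): replacing $\Gamma$ by a terminal tail $\Lambda$ changes neither order convergence nor, for the topological statements, order closedness, since a net converges iff each of its tails does.

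With these preparations, direction (a) follows: if $C$ is order closed in $(P,\leq)$ and a net in $C \cap P_0$ order converges in $(P_0,\leq)$ to $x \in P_0$, pass to a bounded tail, deduce order convergence in $(P,\leq)$, conclude $x \in C$, hence $x \in C \cap P_0$. For (b), take $C$ order closed in $(P_0,\leq)$ and let $C'$ be its order closure in $(P,\leq)$; trivially $C \subseteq C' \cap P_0$. For the reverse inclusion, take $x \in C' \cap P_0$; since $C'$ is the order closure, there is a net in $C$ order converging to $x$ in $(P,\leq)$ — here I would use that the order closure is obtained by iterating the ``add all order limits of nets'' operation, but more carefully one shows directly that a point in $P_0$ not in $C$ can be separated, namely $P_0 \setminus C$ is a union of order-open sets, giving an order-open (in $P$) neighborhood; alternatively, and more cleanly, argue that since $x \in P_0$ there is a net in $C$ converging to $x$ in $(P_0,\leq)$ by an appropriate density/closure argument, pass to a bounded tail, and invoke the equivalence of the two notions of order convergence for bounded nets in $P_0$ to conclude $x \in C$.

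The main obstacle is the second inclusion in (b): controlling the order closure in $(P,\leq)$ of a subset of $P_0$ and showing it does not spill outside $C$ when intersected back with $P_0$. The order closure is generally not obtained in a single step of taking limits of nets, so one cannot naively say ``$x$ is the order limit of a net in $C$.'' The cleanest route, which I expect to follow, is to avoid describing $C'$ explicitly: instead show that $C$ itself, viewed as a subset of $P$, is already order closed in $(P,\leq)$ — for this, suppose a net $(x_\alpha)$ in $C$ order converges to $x$ in $(P,\leq)$; pass to a bounded tail; by Dedekind completeness and Lemma \ref{limsup}(3) we get $\liminf x_\alpha = \limsup x_\alpha = x$ computed in $P$; the relevant suprema/infima are of bounded subsets of $P_0$, hence lie in $P_0$ and agree with those computed in $P_0$; so $x \in P_0$ and $(x_\alpha)$ order converges to $x$ in $(P_0,\leq)$; since $C$ is order closed there, $x \in C$. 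Thus $C$ is order closed in $(P,\leq)$, so we may take $C' = C$, and $C' \cap P_0 = C$ trivially. This also subsumes the argument needed above and makes (a) and (b) symmetric.
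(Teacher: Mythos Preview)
Your final ``cleanest route'' is exactly the paper's argument: reduce (using that $P_0$ is $\tau_o(P,\leq)$-closed) to showing that for $M\subseteq P_0$ one has $M$ closed in $\tau_o(P,\leq)$ \ifff{} $M$ closed in $\tau_o(P_0,\leq)$, and prove each implication by transporting order convergence between $(P,\leq)$ and $(P_0,\leq)$ via Lemma~\ref{limsup} after passing to a bounded tail. The detour through ``$C'=$ order closure of $C$ in $P$'' is unnecessary and, as you yourself note, problematic because order closures are not reached in one step; you can drop that paragraph entirely.

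One point deserves more care. Your assertion that infima of bounded subsets of $P_0$ computed in $P$ and in $P_0$ coincide does \emph{not} follow from the supremum hypothesis alone: the set of lower bounds of $S\subseteq P_0$ lying in $P_0$ can be strictly smaller than the set of lower bounds in $P$, so $\sup_P(L\cap P_0)=\sup_{P_0}(L\cap P_0)$ does not give $\inf_P S$. What actually makes the argument work is the order closedness of $P_0$: in direction~(b), having set $s_\alpha=\sup_P\{x_\beta:\beta\geq\alpha\}\in P_0$ (by the sup hypothesis), the decreasing net $(s_\alpha)$ order converges in $(P,\leq)$ to $\inf_P s_\alpha=\limsup_P x_\alpha=x$, and since $P_0$ is order closed this forces $x\in P_0$; then $x$ is automatically the infimum of $(s_\alpha)$ in $P_0$ as well. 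The paper's proof handles this the same way (its ``similarly'' for the infimum of $(z_\alpha)$ in direction~1, and its ``using the boundedness, $x\in P_0$'' in direction~2, both tacitly invoke order closedness of $P_0$), so your write-up should make this dependence explicit rather than claiming a general inf-agreement lemma.
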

\begin{proof}
Let $M\subseteq P_0$. Since $M$ is closed in $\tau_o(P,\leq)|_{P_0}$ \ifff{} $M$ is closed in $\tau_o(P,\leq)$, it is sufficient to show that $M$ is closed in $\tau_o(P,\leq)$ \ifff{} $M$ is closed in $\tau_o(P_0,\leq)$.

Let $M$ be closed in $\tau_o(P,\leq)$ and let $(x_\alpha)_{\alpha\in\Gamma}$ be a net in $M$ order converging to $x\in P_0$ in $(P_0,\leq)$. Then there are nets $(y_\alpha)_{\alpha\in\Gamma}$ and $(z_\alpha)_{\alpha\in\Gamma}$ in $(P_0,\leq)$ such that $y_\alpha\leq x_\alpha\leq z_\alpha$ for all $\alpha\in\Gamma$, $y_\alpha\uparrow x$, and $z_\alpha\downarrow x$ (where the supremum of $(y_\alpha)_{\alpha\in\Gamma}$ and the infimum of $(z_\alpha)_{\alpha\in\Gamma}$ are taken in $(P_0,\leq)$). Because $x$ is an upper bound of $(y_\alpha)_{\alpha\in\Gamma}$, $\sup_{\alpha\in\Gamma} y_\alpha$ exists in $(P,\leq)$ and belongs to $P_0$. Hence $\sup_{\alpha\in\Gamma} y_\alpha=x$ in $(P,\leq)$. Similarly, $\inf_{\alpha\in\Gamma} z_\alpha=x$ in $(P,\leq)$. Therefore, $(x_\alpha)_{\alpha\in\Gamma}$ is order convergent to $x$ in $(P,\leq)$. As $M$ is closed in $\tau_o(P,\leq)$, $x\in M$.

Conversely, let $M$ be closed in $\tau_o(P_0,\leq)$ and let $(x_\alpha)_{\alpha\in\Gamma}$ be a net in $M$ order converging to $x\in P$ in $(P,\leq)$. Without loss of generality, we can assume that $(x_\alpha)_{\alpha\in\Gamma}$ is bounded (see Lemma~\ref{limsup}) in $(P,\leq)$. By Lemma~\ref{limsup}, $x=\liminf_\alpha x_\alpha=\limsup_\alpha x_\alpha$. Using the boundedness of $(x_\alpha)_{\alpha\in\Gamma}$, $x\in P_0$. It follows from Lemma~\ref{limsup} that $(x_\alpha)_{\alpha\in\Gamma}$ is order convergent to $x$ in $(P_0,\leq)$. As $M$ is closed in $\tau_o(P_0,\leq)$, $x\in M$.
\end{proof}

The \Ca{} $\Bscr(\Hscr)$ of all bounded operators on a complex Hilbert space \Hscr{} is rich on the interesting topologies. One of them is the {\it strong (operator) topology} which is a locally convex topology on $\Bscr(\Hscr)$ generated by semi-norms 
$$
p_\xi:x\mapsto \|x\xi\|,\quad \xi\in\Hscr, x\in\Bscr(\Hscr).
$$ 
Another topology is the {\it strong* (operator) topology} which is a locally convex topology on $\Bscr(\Hscr)$ generated by semi-norms 
$$
p_\xi:x\mapsto \sqrt{\|x\xi\|^2+\|x^*\xi\|^2}, \quad\xi\in\Hscr, x\in\Bscr(\Hscr).
$$
We denote the strong topology and strong* topology by $\tau_{s}$ and $\tau_{s^*}$, respectively. By a {\it \vNa{}} we shall mean a strongly closed \Csa{} of the \Ca{} $\Bscr(\Hscr)$. Every \vNa{} $\Mcal$ has the predual $\Mcal_*$ which consists of normal linear functionals in $\Mcal^*$. Using the predual, one can define the $\sigma$-strong* topology $s^*(\Mcal,\Mcal_*)$ by the family of semi-norms
$$
p_\f:x\mapsto \sqrt{\f(x^*x)+\f(xx^*)}, \quad\f\in\Mcal_* \mbox{ is positive}.
$$
There are the following relationships between topologies on \Mcal{}:
$$
\tau_s|_\Mcal\subseteq\tau_{s^*}|_\Mcal\subseteq s^*(\Mcal,\Mcal_*)\subseteq\tau_u(\Mcal),
$$
where $\tau_u(\Mcal{})$ denotes the norm topology on a \vNa{} \Mcal{}. Moreover, $\tau_{s^*}$ and $s^*(\Mcal,\Mcal_*)$ concide on every norm bounded subset of \Mcal{}.

Let $x$ and $y$ be elements of a von Neumann algebra \Mcal{}. We write $x\preceq y$ if $x^*x=x^*y$ and $xx^*=yx^*$. The binary relation $\preceq$ on \Mcal{} is a partial order called {\it star order}. Elements $x$ and $y$ are said to be {\it *-orthogonal} if $x^*y=yx^*=0$. A simple observation shows \cite{Bo11} that $x\preceq y$ \ifff{} there is $z\in \Mcal$ such that $x$ and $z$ are *-orthogonal and $y=x+z$. Thus the star order can be regarded as a partial order induced by orthogonality. It was pointed out in \cite{Dr78} that there is a connection of the star order with the Moore-Penrose inverse. The star order is also a natural partial order on partial isometries (see, for example, \cite{Ex17,HL63}).

By $l(x)$ we denote the {\it left support} of $x$ which is the smallest projection $p\in \Mcal$ satisfying $px=x$. The left support of $x$ is the projection onto the closure of the range of $x$ and so it is sometimes called the {\it range projection} of $x$. It is well known that a von Neumann algebra contains the left supports of all its elements. The set of all projections in \Mcal{} is denoted by $P(\Mcal{})$. It forms a complete lattice under the standard order $\leq$ called {\it projection lattice} of \Mcal. We denote the projection lattice simply by the symbol $P(\Mcal)$ (instead of using a more correct symbol $(P(\Mcal),\leq)$). Recall that the standard order $\leq$ coincides with the star order $\preceq$ on $P(\Mcal)$. The self-adjoint part of \Mcal{}, the positive part of \Mcal{}, the set of all invertible elements in \Mcal, and the set of all partial isometries in \Mcal{} are denoted by $\Mcal_{sa}$, $\Mcal_+$, $\Mcal_{inv}$, and $\Mcal_{pi}$, respectively.

\begin{lem}\label{smaller element}
Let \Mcal{} be a \vNa{} and let $x\in \Mcal$. If $y\in \Mcal_+$ (resp. $y\in \Mcal_{pi}$) and $x\preceq y$, then $x\in \Mcal_+$ (resp. $x\in \Mcal_{pi}$).
\end{lem}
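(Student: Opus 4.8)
The plan is to treat the two cases separately, relying in both on the description of the star order recalled above: $x\preceq y$ \ifff{} $y=x+z$ for some $z$ that is \Sor{} to $x$, i.e.\ $x^*z=zx^*=0$, whence also $xz^*=z^*x=0$ upon taking adjoints. I would also record the equivalent form of the defining equations: $x^*x=x^*y$ says $x^*(x-y)=0$, hence $l(x)(x-y)=0$ and $x=l(x)y$, while symmetrically $xx^*=yx^*$ gives $x=y\,r(x)$, where $r(x)=l(x^*)$ is the right support of $x$ (again an element of \Mcal{}).

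For the case $y\in\Mcal_{pi}$, so $yy^*y=y$, I would write $y=x+z$ with $x$ and $z$ \Sor{} and expand $yy^*y$: every mixed term is killed by $x^*z=zx^*=xz^*=z^*x=0$, leaving $yy^*y=xx^*x+zz^*z$ and therefore
$$
xx^*x-x=z-zz^*z.
$$
Since $x^*z=0$ forces the range of $z$ to be orthogonal to that of $x$, one has $l(z)\le\unit-l(x)$; multiplying the displayed identity on the left by $p:=l(x)$ then annihilates the right-hand side (as $pz=0$ and the range of $zz^*z$ lies in that of $z$) and fixes the left-hand side (as $px=x$ and the range of $xx^*x$ lies in that of $x$). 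This yields $xx^*x=x$, i.e.\ $x\in\Mcal_{pi}$.

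For the case $y\in\Mcal_+$, so $y=y^*\ge0$, I would take adjoints in $x=l(x)y$ and $x=y\,r(x)$, obtaining (using $y=y^*$) $x^*=y\,l(x)$ and $x^*=r(x)\,y$, hence $y\,l(x)=r(x)\,y$. Multiplying on the left by $y$ gives $y^2l(x)=\bigl(y\,r(x)\bigr)y=xy$, and taking adjoints in $xx^*=yx^*$ gives $xx^*=xy$; so $y^2l(x)=xx^*$ is self-adjoint, which forces $l(x)$ to commute with $y^2$, and hence (since $y\ge0$) with $y$ and with $y^{1/2}$. Consequently $x=l(x)y=y^{1/2}\,l(x)\,y^{1/2}\in\Mcal_+$.

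The arithmetic here is short, and I do not expect a genuine obstacle; the only thing needing a little care is the bookkeeping of support projections (which projection fixes or kills which factor) and, in the positive case, the routine passage from commutation with $y^2$ to commutation with $y^{1/2}$. The single idea behind both cases is that the relations $x^*z=zx^*=0$ allow the defining algebraic identity of $\Mcal_{pi}$, respectively of $\Mcal_+$, to be compressed by the left support of $x$, and the assertion then drops out.
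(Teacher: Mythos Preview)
Your argument is correct in both cases. In the partial-isometry case the orthogonality relations $x^*z=zx^*=0$ (and their adjoints) indeed kill all cross terms in $yy^*y$, and compressing by $l(x)$ isolates $xx^*x=x$. In the positive case your chain $y^2l(x)=xy=xx^*$ is valid, and the passage from $[l(x),y^2]=0$ to $[l(x),y^{1/2}]=0$ is the standard functional-calculus step (for $y\ge 0$, both $y$ and $y^{1/2}$ lie in the \Csa{} generated by $y^2$), so $x=y^{1/2}l(x)y^{1/2}\ge 0$.

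As for comparison with the paper: there is nothing to compare against, since the paper does not give its own proof of this lemma but simply cites \cite[Corollary~2.9]{ACMS10} and \cite[Proposition~3.1]{Bo11}. Your write-up is therefore a self-contained alternative. It is worth noting that the positive case can be shortened slightly once one has Proposition~\ref{antezana} available (which the paper states immediately afterwards): that proposition gives directly that $l(x)$ commutes with $yy^*=y^2$, bypassing your computation via $y\,l(x)=r(x)\,y$. But your route is perfectly sound and has the virtue of not relying on that later-stated result.
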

	\begin{proof}
		It was proved in \cite[Corollary 2.9]{ACMS10} and \cite[Proposition 3.1]{Bo11}.
	\end{proof}

The previous lemma is no longer true for self-adjoint operators. Indeed, it was pointed out in \cite{BHLL04} that 
$$
\begin{pmatrix}0&1\\0&0\end{pmatrix}\preceq\begin{pmatrix}0&1\\1&0\end{pmatrix}.
$$

\section{Infimum and supremum}

Let us recall a useful result proved in \cite{ACMS10}.

 \begin{pro}[{\cite[Theorem 2.7]{ACMS10}}]\label{antezana}
Let $x,y\in\Bscr(\Hscr)$. Then $x\preceq y$ \ifff{} $x=l(x) y$, $l(x)\leq l(y)$, and $l(x)$ commutes with $yy^*$.
 \end{pro}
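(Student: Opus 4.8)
The plan is to check both implications directly from the definition $x\preceq y\iff x^{*}x=x^{*}y$ and $xx^{*}=yx^{*}$, using only elementary properties of the left support: $l(x)x=x$, $x^{*}l(x)=x^{*}$, $l(x)xx^{*}=xx^{*}$, and the fact that $l(x)=l(xx^{*})$ is the spectral projection of the positive operator $xx^{*}$ for the complement of $\{0\}$ in its spectrum, so in particular $l(x)$ commutes with $xx^{*}$ and projects onto $\overline{\operatorname{ran}(xx^{*})}$. I will also use the decomposition recalled above: $x\preceq y$ \ifff{} $y=x+z$ for some $z\in\Bscr(\Hscr)$ with $x^{*}z=zx^{*}=0$.

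\emph{From $x\preceq y$ to the three conditions.}
Write $y=x+z$ with $x^{*}z=zx^{*}=0$. The first move is to rephrase the orthogonality relations in terms of supports: $x^{*}z=0$ means precisely $\operatorname{ran}(z)\subseteq\ker x^{*}=(\unit-l(x))\Hscr$, i.e.\ $l(x)z=0$ (equivalently $z^{*}l(x)=0$), and $zx^{*}=0$ is the same as its adjoint $xz^{*}=0$. Then $l(x)y=l(x)x+l(x)z=x$, which is the first condition. Expanding $yy^{*}=(x+z)(x+z)^{*}$ and dropping the two cross terms, which vanish because $xz^{*}=zx^{*}=0$, gives $yy^{*}=xx^{*}+zz^{*}$; since $l(x)$ commutes with $xx^{*}$ while $l(x)zz^{*}=zz^{*}l(x)=0$, it follows that $l(x)$ commutes with $yy^{*}$, indeed $l(x)yy^{*}=yy^{*}l(x)=xx^{*}$, which is the third condition. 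Consequently the range of $xx^{*}=yy^{*}l(x)$ is contained in $\operatorname{ran}(yy^{*})$, so $l(x)=l(xx^{*})\leq l(yy^{*})=l(y)$, the second condition. (Alternatively, $yy^{*}=xx^{*}+zz^{*}$ is a sum of positive operators with orthogonal ranges $l(x)\Hscr$ and $l(z)\Hscr$, so $l(y)=l(x)+l(z)\geq l(x)$.)

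\emph{From the three conditions back to $x\preceq y$.}
Put $p=l(x)$, so the first hypothesis reads $x=py$ and hence $x^{*}=y^{*}p$. Then $x^{*}x=y^{*}p\,py=y^{*}py=(py)^{*}y=x^{*}y$, and, using that $p$ commutes with $yy^{*}$, $xx^{*}=(py)(py)^{*}=p\,yy^{*}\,p=yy^{*}p=y(py)^{*}=yx^{*}$. Hence $x^{*}x=x^{*}y$ and $xx^{*}=yx^{*}$, that is, $x\preceq y$. Note that only the first and third hypotheses enter here; the middle one, $l(x)\leq l(y)$, is a consequence of the other two, exactly as in the last step above.

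\emph{Where the difficulty lies.}
None of this is deep; the one point demanding care is the bookkeeping in the forward direction --- verifying that $x^{*}z=zx^{*}=0$ is equivalent to the pair $l(x)z=0$, $xz^{*}=0$, and that these two facts \emph{simultaneously} yield $l(x)y=x$, annihilate the cross terms in $yy^{*}$, and force $l(x)$ to commute with $zz^{*}$. Once the identity $yy^{*}=xx^{*}+zz^{*}$ is available the rest is one-line manipulation of projections, and in the reverse implication the hypothesis that $l(x)$ commutes with $yy^{*}$ is needed only to cancel the compression $p(\cdot)p$ forced by $x=py$ and $x^{*}=y^{*}p$.
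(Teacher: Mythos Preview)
Your argument is correct in both directions. The paper does not supply its own proof of this proposition --- it simply quotes the result from \cite[Theorem~2.7]{ACMS10} --- so there is no ``paper's proof'' to compare against for the forward implication. What the paper \emph{does} include is the remark immediately following the proposition, which shows that the hypothesis $l(x)\leq l(y)$ is redundant: from $x=l(x)y$ and $[l(x),yy^*]=0$ one computes $x^*x=x^*l(x)y=x^*y$ and $xx^*=l(x)yy^*l(x)=yy^*l(x)=yx^*$. This is exactly your reverse implication, line for line, and you draw the same conclusion about the middle condition being superfluous.

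For the forward direction you use the \Sor{} decomposition $y=x+z$ recalled in the preliminaries, which is a natural choice in this paper's setting and yields the three conditions with minimal computation. The original proof in \cite{ACMS10} proceeds somewhat differently (working more directly with the defining equations $x^*x=x^*y$, $xx^*=yx^*$ and polar-type considerations), but your route through $yy^*=xx^*+zz^*$ is arguably cleaner and makes the commutation $[l(x),yy^*]=0$ and the inequality $l(x)\leq l(y)$ transparent. Nothing is missing.
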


Let us note that we can omit the condition $l(x)\leq l(y)$ in the previous proposition. Indeed, if $x=l(x) y$ and $l(x)$ commutes with $yy^*$, then $x^*x=x^*l(x)y=(l(x)x)^*y=x^*y$ and $xx^*=l(x)yy^*l(x)=yy^*l(x)=y(l(x)y)^*=yx^*$.

The following proposition is a special case of Theorem 4.4 in \cite{Ci15} (see also \cite[Theorem 7]{Ja83}). Because the proof was omitted in \cite{Ci15}, we prove this result for convenience of the reader.

	\begin{pro}\label{suprema and infima}
Let $M$ be a nonempty subset of a \vNa{} \Mcal{} and let $y\in\Mcal$ be an upper bound of $M$ (with respect to the star order).
			\begin{enumerate}
				\item $\left(\sup_{x\in M}l(x)\right)y$, where $\sup_{x\in M}l(x)$ is considered in $P(\Mcal)$, is the supremum of $M$ in $(\Mcal,\preceq)$.
				\item $\left(\inf_{x\in M}l(x)\right)y$, where $\inf_{x\in M}l(x)$ is considered in $P(\Mcal)$, is the infimum of $M$ in $(\Mcal,\preceq)$.
			\end{enumerate}
	\end{pro}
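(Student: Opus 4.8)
The plan is to deduce everything directly from Proposition~\ref{antezana}, used in the strengthened form noted right after it: for $a,b\in\Mcal$ one has $a\preceq b$ if and only if $a=l(a)b$ and $l(a)$ commutes with $bb^*$. Write $p=\sup_{x\in M}l(x)$ and $q=\inf_{x\in M}l(x)$, the supremum and infimum being taken in the complete lattice $P(\Mcal)$. The one non-formal ingredient is the observation that $p$ and $q$ commute with $yy^*$: for each $x\in M$ we have $x\preceq y$, so $l(x)$ commutes with $yy^*$ by Proposition~\ref{antezana}; thus all the $l(x)$ lie in the von Neumann subalgebra $\{yy^*\}'\cap\Mcal$, and since the supremum and the infimum of a family of projections computed in a von Neumann subalgebra coincide with those computed in the ambient algebra, $p$ and $q$ belong to $\{yy^*\}'\cap\Mcal$. (Equivalently, $p$ projects onto $\overline{\operatorname{span}}\bigcup_{x\in M}\operatorname{ran}l(x)$ and $q$ onto $\bigcap_{x\in M}\operatorname{ran}l(x)$, and both subspaces are invariant under $yy^*$.) Replacing $y$ by an arbitrary upper bound $u$ of $M$, the same reasoning gives that $p$ and $q$ commute with $uu^*$ as well. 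I will also use the elementary facts that $x\preceq y$ implies $l(x)\le l(y)$ (hence $q\le l(y)$ and $ql(x)=q$ for every $x\in M$) and that, writing $x=l(x)y$, one has $xx^*=yy^*l(x)=l(x)yy^*$.

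For part (i), set $s=py$. To show $s$ is an upper bound of $M$, fix $x\in M$: since $l(x)\le p$ we get $l(x)s=l(x)py=l(x)y=x$, and a short computation using $l(x)p=l(x)$ together with the fact that $l(x)$ and $p$ both commute with $yy^*$ shows that $l(x)$ commutes with $ss^*=pyy^*p$; hence $x\preceq s$ by the strengthened Proposition~\ref{antezana}. It follows that $l(x)\le l(s)$ for all $x\in M$, so $p\le l(s)$, and since trivially $l(s)=l(py)\le p$ we obtain $l(s)=p$. Now let $u$ be any upper bound of $M$. For each $x\in M$ we have $l(x)y=x=l(x)u$, i.e. $l(x)(y-u)=0$; since $\operatorname{ran}(y-u)$ is then orthogonal to $\operatorname{ran}l(x)$ for every $x$, it is orthogonal to $\operatorname{ran}p$, so $p(y-u)=0$, i.e. $py=pu$. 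Therefore $s=py=pu=l(s)u$ and $l(s)=p$ commutes with $uu^*$, whence $s\preceq u$. Thus $s$ is the least upper bound of $M$.

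For part (ii), set $t=qy$. Because $ql(x)=q$ for every $x\in M$, we have $t=ql(x)y=qx$ for all such $x$; moreover $tt^*=qyy^*q=qyy^*$ (as $q$ commutes with $yy^*$), and the range projection of $qyy^*$ is $ql(yy^*)=ql(y)=q$, so $l(t)=q$. To see $t$ is a lower bound of $M$, fix $x\in M$: then $l(t)x=qx=t$, and $q$ commutes with $xx^*=l(x)yy^*$ because it commutes with both $l(x)$ and $yy^*$; hence $t\preceq x$. Finally, let $w$ be any lower bound of $M$, so $w\preceq x$ for all $x\in M$. Then $l(w)\le l(x)$ for each $x$, hence $l(w)\le q$; moreover $w=l(w)x$ for all $x$, and $w\preceq y$ by transitivity, so $l(w)$ commutes with $yy^*$. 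Consequently $l(w)t=l(w)qy=l(w)y=l(w)l(x)y=l(w)x=w$, and $l(w)$ commutes with $tt^*=qyy^*$ since it commutes with both $q$ and $yy^*$; hence $w\preceq t$. Thus $t$ is the greatest lower bound of $M$.

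The only step requiring more than routine manipulation is the stability property used in the first paragraph — that forming suprema and infima of the left-support projections keeps us inside $\{yy^*\}'$ (and inside $\{uu^*\}'$ for competing upper bounds $u$) — together with the passage ``$l(x)(y-u)=0$ for all $x$'' $\Rightarrow$ ``$p(y-u)=0$''. Once these are in hand, both parts are a direct check against Proposition~\ref{antezana}.
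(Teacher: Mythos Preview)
Your proof is correct and follows essentially the same route as the paper's: both arguments reduce everything to Proposition~\ref{antezana}, use that the $l(x)$ lie in the commutant $\{yy^*\}'$ (resp.\ $\{uu^*\}'$) to conclude that $p$ and $q$ do as well, and pass from $l(x)(y-u)=0$ for all $x$ to $p(y-u)=0$. The only differences are cosmetic---you verify explicitly that $py$ is an upper bound and that $l(s)=p$, $l(t)=q$, whereas the paper leaves the first as ``easy to verify'' and for the second asserts $l(pu)=p$ directly; in part~(ii) you write $tt^*=qyy^*$ while the paper works with $py(py)^*=pyy^*p$, but the verifications are otherwise identical.
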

		\begin{proof}
		\hfill{}
				\begin{enumerate}
						\item Let $p$ be the supremum of $\{l(x)|\,x\in M\}$ in $P(\Mcal)$ and let $y$ be an upper bound of $M$. It is easy to verify that $py$ is an upper bound of $M$.
						
Let $u\in \Mcal$ be an upper bound of $M$. We have to show that $py\preceq u$. Applying Proposition~\ref{antezana}, we see that, for all $x\in M$, $l(x)\leq l(u)$ and $l(x)$ commutes with $uu^*$. Hence $p\leq l(u)$ and $p$ commutes with $uu^*$. Moreover, $l(pu)u=pu$ because $l(pu)=p$. By Proposition~\ref{antezana}, $pu\preceq u$. As $l(x)(y-u)=0$ for all $x\in M$, we have $l(x)l(y-u)=0$ for all $x\in M$ and so $pl(y-u)=0$. It follows from this that $p(y-u)=pl(y-u)(y-u)=0$. Therefore, $py=pu\preceq u$.
						
						\item Let $p$ be the infimum of $\{l(x)|\,x\in M\}$ in $P(\Mcal)$ and let $y$ be an upper bound of $M$. It follows from Proposition~\ref{antezana} that, for each $x\in M$, $x=l(x)y$ and $yy^*$ commutes with $l(x)$. Moreover, $p$ commutes with $yy^*$ because $p$ is an element of the \vNa{} $\{yy^*\}'$. Therefore, 
						\begin{eqnarray*}
							xx^*p&=&l(x)yy^*l(x)p=yy^*l(x)p=yy^*p=pyy^*=pl(x)yy^*\\&=&pl(x)yy^*l(x)=pxx^*
						\end{eqnarray*}
holds for all $x\in M$. By Proposition~\ref{antezana}, we obtain that $py$ is a lower bound of $M$.

If $u\in\Mcal$ is a lower bound of $M$, then $l(u)\leq p$. Since $u\preceq y$, $u=l(u)y=l(u)py$ and $l(u)$ commutes with $yy^*$. Furthermore, $l(u)\leq p$ ensures that $l(u)$ commutes with $p$. Hence $l(u)$ commutes with $pyy^*p=py(py)^*$. Applying Proposition~\ref{antezana}, $u\preceq py$.
				\end{enumerate}
		\end{proof}

Let us note that if $M$ is an empty subset of a \vNa{} \Mcal{}, then the supremum of $M$ in $(\Mcal,\preceq)$ is 0 and the infimum of $M$ in $(\Mcal,\preceq)$ does not exist. 

The statement (iii) in the following corollary is easily seen from \cite[Theorem~4.4]{Ci15} and the fact that bounded (with respect to the star order) set of self-adjoint elements has a self-adjoint upper bound (for this, see the proof of the statement).

		\begin{cor}\label{supinf1}
Let \Mcal{} be a \vNa{}. Then the following statements hold:
			\begin{enumerate}
				  \item The poset $(\Mcal,\preceq)$ is Dedekind complete.
					\item The supremum of every subset of $P(\Mcal)$ in $(\Mcal,\preceq)$ is a projection. The infimum of every nonempty subset of $P(\Mcal)$ in $(\Mcal,\preceq)$ is a projection.
					\item The supremum of every bounded set $M\subseteq \Mcal_{sa}$ in $(\Mcal,\preceq)$ is a self-adjoint element. The infimum of every nonempty set $M\subseteq \Mcal_{sa}$ in $(\Mcal,\preceq)$ is a self-adjoint element.
					\item The supremum of every bounded set $M\subseteq \Mcal_+$ in $(\Mcal,\preceq)$ is a positive element. The infimum of every nonempty set $M\subseteq \Mcal_+$ in $(\Mcal,\preceq)$ is a positive element.
			\end{enumerate}
		\end{cor}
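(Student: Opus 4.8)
The plan is to derive all four parts from Proposition~\ref{suprema and infima}, Lemma~\ref{smaller element}, and one elementary remark about the star order, with essentially all of the work concentrated in the supremum half of (iv).

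Parts (i) and (ii) should be quick. Statement (i) is literally Proposition~\ref{suprema and infima}(i): any nonempty $M\subseteq\Mcal$ with an upper bound $y$ has supremum $(\sup_{x\in M}l(x))\,y$, so $(\Mcal,\preceq)$ is Dedekind complete. For (ii) I would note that $\sup\emptyset=0\in P(\Mcal)$ and that a nonempty $M\subseteq P(\Mcal)$ has $\unit$ as an upper bound; since $l(p)=p$ for a projection $p$, Proposition~\ref{suprema and infima} identifies the supremum (resp.\ infimum) of $M$ in $(\Mcal,\preceq)$ with $\sup M$ (resp.\ $\inf M$) computed in the complete lattice $P(\Mcal)$, hence with a projection.

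For (iii) the one fact I need is that $\preceq$ is compatible with the adjoint: $a\preceq b$ implies $a^*\preceq b^*$ (take adjoints in the defining identities $a^*a=a^*b$, $aa^*=ba^*$). Then, for a bounded $M\subseteq\Mcal_{sa}$, set $s=\sup M$ (existing by (i)); since each $x\in M$ satisfies $x=x^*\preceq s$, we also get $x\preceq s^*$, so $s^*$ is an upper bound and $s\preceq s^*$; applying the remark once more gives $s^*\preceq s$, hence $s=s^*$ by antisymmetry. The infimum of a nonempty $M\subseteq\Mcal_{sa}$ (which exists since $0$ is a lower bound, using Dedekind completeness) is treated symmetrically; this part uses only (i).

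For (iv), the infimum is immediate from Lemma~\ref{smaller element}: if $M\subseteq\Mcal_+$ is nonempty and $t=\inf M$, then $t\preceq x$ for any $x\in M\subseteq\Mcal_+$, so $t\in\Mcal_+$. The supremum is the heart of the matter. Let $M\subseteq\Mcal_+$ be bounded and $s=\sup M$, which is self-adjoint by (iii); I want $s\geq 0$. The strategy is to show that the positive part $s_+$ is also an upper bound of $M$. For $x\in M$, Proposition~\ref{antezana} gives $x=l(x)s$, and since $x$ is self-adjoint this yields $l(x)s=sl(x)$, so $l(x)$ commutes with $s_+$ and $s_-$; consequently $x=l(x)s_+-l(x)s_-$ exhibits $x$ as a difference of the orthogonal positive elements $l(x)s_+=s_+^{1/2}l(x)s_+^{1/2}$ and $l(x)s_-=s_-^{1/2}l(x)s_-^{1/2}$, so uniqueness of the Jordan decomposition forces $l(x)s_-=0$ and $x=l(x)s_+$, whence $x\preceq s_+$ directly from the definition. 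With $s_+$ an upper bound we get $s\preceq s_+$, which forces $s^2=ss_+=s_+^2$; comparing with $s^2=s_+^2+s_-^2$ gives $s_-=0$, i.e.\ $s=s_+\geq 0$. The main obstacle is precisely this last argument — extracting the commutation $l(x)s=sl(x)$, which really uses self-adjointness of $x$ rather than just Proposition~\ref{antezana}, and then invoking uniqueness of the Jordan decomposition; everything else reduces directly to results already established above.
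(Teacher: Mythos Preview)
Your argument is correct throughout, but it diverges from the paper in two places.

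For (iii), you handle the supremum by the involution trick ($s^*$ is again an upper bound, hence $s\preceq s^*$ and $s^*\preceq s$). The paper does this only for the infimum; for the supremum it instead manufactures a self-adjoint upper bound $u=\tfrac{y+y^*}{2}$ (invoking \cite[Proposition~2.4]{Bo11}), and then argues that each $l(x)$, and hence $\sup_{x\in M} l(x)$, commutes with $u$, so that $\bigl(\sup_{x\in M} l(x)\bigr)u$ is visibly self-adjoint. Your route is shorter and avoids the external citation; the paper's route has the minor bonus of exhibiting a concrete self-adjoint upper bound.

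For the supremum in (iv), the paper is much quicker: it uses the cited fact that $x\preceq y$ implies $|x|\preceq|y|$ to replace an arbitrary upper bound by a positive one, and then Lemma~\ref{smaller element} applied to $s\preceq u\in\Mcal_+$ gives $s\in\Mcal_+$ in one line. Your Jordan-decomposition argument (commute $l(x)$ with $s$, split $x=l(x)s_+-l(x)s_-$, use uniqueness to kill $l(x)s_-$, conclude $s\preceq s_+$ and hence $s_-=0$) is a genuine alternative: it is self-contained and does not need the $|x|\preceq|y|$ result, at the cost of more computation. Both approaches ultimately lean on Proposition~\ref{antezana}; yours extracts more from it directly, while the paper outsources the key step to an external lemma.
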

				\begin{proof}\hfill{}
					\begin{enumerate}
							\item The statement follows directly from Proposition~\ref{suprema and infima}.
							
							\item It is clear that $\mathbf{1}\in P(\Mcal)$ is an upper bound of every subset $M$ of $P(\Mcal)$. If $M\subseteq P(\Mcal)$ is nonempty, then Proposition~\ref{suprema and infima} implies that the supremum and the infimum of $M$ in $(\Mcal,\preceq)$ are projections. Moreover, the supremum of the empty set in $(\Mcal,\preceq)$ is equal to the infimum of \Mcal{} which is 0.
							
							\item Let $M\subseteq \Mcal_{sa}$ be a nonempty and let $y\in\Mcal$ be an upper bound of $M$. It is easy to see that $y^*$ is also upper bound of $M$. It follows from \cite[Proposition~2.4]{Bo11} that $u=\frac{y+y^*}{2}$ is an upper bound of $M$. According to Proposition~\ref{suprema and infima}, $s=\left(\sup_{x\in M} l(x)\right)u$ is the supremum of $M$. Since $x=l(x) u$ for each $x\in M$, $l(x)$ commutes with $u$ for every $x\in M$. Thus $\left(\sup_{x\in M} l(x)\right)\in \{u\}'$ and so $\left(\sup_{x\in M} l(x)\right)$ commutes with $u$. Therefore, $s=\left(\sup_{x\in M} l(x)\right)u$ is self-adjoint. If $M$ is empty, then the supremum of $M$ is 0.
							
			Let $M$ be a nonempty subset of $\Mcal_{sa}$ and let 
			$$L_M=\{u\in\Mcal|\,u\preceq x \mbox{ for all }x\in M\}.$$
			The set $L_M$ is nonempty and bounded above. Therefore, $L_M$ has the supremum $s$ of the form $s=\left(\sup_{x\in L_M}l(x)\right)y$, where $y\in M$ is an arbitrary fixed element. Let us show that $s$ is self-adjoint. Obviously, $s\in L_M$. As $M$ is a set of self-adjoint elements and the involution preserves the star order, we have $s^*\in L_M$ which gives $s^*\preceq s$. It follows from this that $s\preceq s^*$, and therefore $s=s^*$.
							
							\item Since $x\preceq y$ implies $|x|\preceq |y|$ (see \cite[Corollary~2.13]{ACMS10} or \cite[Corollary~2.9]{Bo11}), we can assume without loss of generality that an upper bound $u$ of the nonempty set $M\subseteq \Mcal_+$ is positive. 
							According to Lemma~\ref{smaller element}, $s=\left(\sup_{x\in M} l(x)\right)u$ is positive. If $M$ is empty, then the supremum of $M$ is 0 in $(\Mcal,\preceq)$.
							
Let $M$ be a nonempty subset of $\Mcal_+$ and let 
							$$
							L_M=\{u\in\Mcal|\,u\preceq x \mbox{ for all }x\in M\}.
							$$
The set $L_M$ is nonempty and bounded above by a positive element. Therefore, $L_M$ contains only positive elements (see Lemma~\ref{smaller element}). Since $\inf_{x\in M} x=\sup_{x\in L_M} x$, $\inf_{x\in M} x$ has to be positive.
							
					\end{enumerate}			
				\end{proof}

It follows directly from the previous corollary that posets $(\Mcal_{sa},\preceq)$ and $(\Mcal_+,\preceq)$ are Dedekind complete. Furthermore, if $M$ is a bounded subset of $\Mcal_{sa}$ (resp. $\Mcal_+$), then the supremum of $M$ in $(\Mcal_{sa},\preceq)$ (resp. $(\Mcal_+,\preceq)$) coincides with the supremum of $M$ in $(\Mcal,\preceq)$. Similarly, we have the equality of the infima of $M$ in $(\Mcal_{sa},\preceq)$ (resp. $(\Mcal_+,\preceq)$) and in $(\Mcal,\preceq)$ whenever $M$ is nonempty subset of $\Mcal_{sa}$ (resp. $\Mcal_+$).

In the same spirit as before, we can prove that the supremum and the infimum of a set of partial isometries are again partial isometries. The case of the supremum can also be found in \cite[Theorem 12]{Ja83}.	

\begin{cor}\label{supinf2}
Let $\Mcal_{pi}$ be the set of all partial isometries in a von Neumann algebra \Mcal. The supremum of every bounded subset of $\Mcal_{pi}$ in $(\Mcal,\preceq)$ is a partial isometry. The infimum of every nonempty subset of $\Mcal_{pi}$ in $(\Mcal,\preceq)$ is a partial isometry.
\end{cor}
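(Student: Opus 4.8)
The plan is to follow the pattern of the proofs of parts (iii) and (iv) of Corollary~\ref{supinf1}: first settle the claim about suprema by a direct computation resting on Propositions~\ref{suprema and infima} and~\ref{antezana}, and then deduce the claim about infima by passing to the set of lower bounds.

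\emph{Suprema.} Let $M\subseteq\Mcal_{pi}$ be bounded; we may assume $M\ne\emptyset$, since $\sup\emptyset=0\in\Mcal_{pi}$. Fix an upper bound $y\in\Mcal$ of $M$ and put $p=\sup_{v\in M}l(v)$ in $P(\Mcal)$. By Proposition~\ref{suprema and infima}(i) the supremum of $M$ in $(\Mcal,\preceq)$ equals $py$, so it is enough to check that $(py)(py)^*$ is a projection; in fact we will obtain $(py)(py)^*=p$. For a partial isometry $v$ one has $l(v)=vv^*$, and $v\preceq y$ yields, via Proposition~\ref{antezana}, that $v=l(v)y=vv^*y$ and that $vv^*$ commutes with $yy^*$. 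Therefore
$$
vv^*=(vv^*y)(vv^*y)^*=vv^*\,yy^*\,vv^*=yy^*vv^*\qquad(v\in M),
$$
where the last step uses that $vv^*$ is an idempotent commuting with $yy^*$. Thus each projection $vv^*$ is dominated by the spectral projection $q$ of $yy^*$ onto $\ker(\unit-yy^*)$ (equivalently, the largest projection $r$ with $yy^*r=r$), whence $p\le q$. Since $q$, being a spectral projection of $yy^*$, commutes with $yy^*$ and satisfies $qyy^*=q=yy^*q$, we get
$$
(py)(py)^*=p\,yy^*\,p=(pq)\,yy^*\,(qp)=p\,(qyy^*q)\,p=pqp=p,
$$
a projection, so $\sup M=py\in\Mcal_{pi}$.

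\emph{Infima.} Let $M\subseteq\Mcal_{pi}$ be nonempty and set $L_M=\{u\in\Mcal\mid u\preceq x\text{ for all }x\in M\}$. Then $L_M$ is nonempty (it contains $0$) and bounded above by any element of $M$, and each $u\in L_M$ satisfies $u\preceq v$ for a partial isometry $v\in M$, so $u\in\Mcal_{pi}$ by Lemma~\ref{smaller element}. Hence $L_M$ is a bounded subset of $\Mcal_{pi}$, and by the case already proved $\sup L_M\in\Mcal_{pi}$. Finally, as in Corollary~\ref{supinf1}, $\inf M=\sup L_M$ (the supremum, which exists by Dedekind completeness, of the set of lower bounds of $M$), so $\inf M\in\Mcal_{pi}$.

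The only substantial point is the first displayed identity: it shows that $v\preceq y$ forces the range projection $vv^*$ of every $v\in M$ into the unit eigenspace of $yy^*$, and hence forces $p=\sup_{v\in M}vv^*$ there as well; once this is established, the self-adjointness of $q$ together with $p\le q$ makes $(py)(py)^*$ collapse to $p$. The remainder is a routine adaptation of the arguments already given for $\Mcal_{sa}$ and $\Mcal_+$.
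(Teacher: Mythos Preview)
Your proof is correct. The infimum argument matches the paper's exactly, but for the supremum you take a genuinely different route. The paper invokes an external result (\cite[Theorem~2.15]{ACMS10}) asserting that every bounded set of partial isometries admits a \emph{partial isometry} as an upper bound; once such an upper bound $u$ is in hand, the supremum $pu$ satisfies $pu\preceq u$, and Lemma~\ref{smaller element} immediately yields $pu\in\Mcal_{pi}$. You instead work with an arbitrary upper bound $y$ and show directly that $(py)(py)^*=p$: the identity $yy^*\,vv^*=vv^*$ forces each range projection $vv^*$ under the spectral projection $q=E_{\{1\}}(yy^*)$, hence $p\le q$, and $qyy^*q=q$ collapses $pyy^*p$ to $p$. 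Your argument is a bit longer but more self-contained, trading the cited existence result for elementary spectral reasoning; the paper's version is shorter but depends on machinery developed elsewhere.
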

\begin{proof}
Let $M\subseteq \Mcal_{pi}$ be bounded and nonempty. By \cite[Theorem 2.15]{ACMS10}, there is a partial isometry $u$ such that it is an upper bound of $M$. Set $p=\sup_{x\in M} l(x)$. It follows from Proposition~\ref{suprema and infima} that $pu$ is the supremum of $M$. 
By Lemma~\ref{smaller element}, we see that $pu$ is a partial isometry. If $M$ is empty, then the supremum of $M$ is 0 in $(\Mcal,\preceq)$.
							
				Let $M$ be a nonempty subset of $\Mcal_{pi}$ and let 
				$$
				L_M=\{u\in\Mcal|\,u\preceq x \mbox{ for all }x\in M\}.
				$$
The set $L_M$ is nonempty and bounded above by a partial isometry. Using Lemma~\ref{smaller element}, we obtain that $L_M$ contains only partial isometries. Since $\inf_{x\in M} x=\sup_{x\in L_M} x$, $\inf_{x\in M} x$ has to be a partial isometry.
\end{proof}

The strong operator limit of monotone nets in $(\Bscr(\Hscr),\preceq)$ was studied in \cite{ACMS10}. Furthermore, a connection between suprema of increasing nets in $(\Bscr(\Hscr)_{sa},\preceq)$ and the strong operator limit was shown in \cite{Gu06,PV07}. We prove a similar result to that of \cite[Theorem 4.5]{PV07}.

	\begin{theo}\label{monotone nets}
Let \Mcal{} be a \vNa{}.
		\begin{enumerate}
				\item If $(x_\alpha)_{\alpha\in\Gamma}$ is an increasing net in $(\Mcal,\preceq)$ and bounded above, then the strong (operator) limit of $(x_\alpha)_{\alpha\in\Gamma}$ exists and it is equal to the supremum of $(x_\alpha)_{\alpha\in\Gamma}$.
				\item If $(x_\alpha)_{\alpha\in\Gamma}$ is a decreasing net in $(\Mcal,\preceq)$, then the strong (operator) limit of $(x_\alpha)_{\alpha\in\Gamma}$ exists and is equal to the infimum of $(x_\alpha)_{\alpha\in\Gamma}$.
		\end{enumerate}
	\end{theo}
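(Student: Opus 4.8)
The plan is to reduce both statements to the classical fact that an increasing (resp.\ decreasing) net of projections converges in the strong operator topology to its supremum (resp.\ infimum) in the complete lattice $P(\Mcal)$; the bridge is Proposition~\ref{antezana}, from which $x\preceq y$ yields both $x=l(x)y$ and $l(x)\leq l(y)$.

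For (i), I would first note that $\alpha\leq\beta$ implies $x_\alpha\preceq x_\beta$ and hence $l(x_\alpha)\leq l(x_\beta)$, so $(l(x_\alpha))_{\alpha\in\Gamma}$ is an increasing net in $P(\Mcal)$; let $p=\sup_{\alpha\in\Gamma}l(x_\alpha)$ (taken in $P(\Mcal)$), so that $l(x_\alpha)\to p$ strongly. Fix an upper bound $y$ of the net. By Proposition~\ref{antezana}, $x_\alpha=l(x_\alpha)y$ for every $\alpha$, so for each $\xi\in\Hscr$ one has $x_\alpha\xi=l(x_\alpha)(y\xi)\to p(y\xi)=(py)\xi$; thus $x_\alpha\to py$ strongly. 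Since Proposition~\ref{suprema and infima}(i) identifies $py$ with the supremum of $(x_\alpha)_{\alpha\in\Gamma}$ in $(\Mcal,\preceq)$, this proves (i).

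For (ii) the argument runs parallel, except that one must pass to a tail in order to exhibit a star-order upper bound, because an individual $x_{\alpha_0}$ need not bound the whole net. As before, $\alpha\leq\beta$ gives $l(x_\beta)\leq l(x_\alpha)$, so $(l(x_\alpha))_{\alpha\in\Gamma}$ is a decreasing net of projections; put $q=\inf_{\alpha\in\Gamma}l(x_\alpha)$ in $P(\Mcal)$, so $l(x_\alpha)\to q$ strongly, and observe that $q=\inf_{\alpha\geq\alpha_0}l(x_\alpha)$ for every $\alpha_0\in\Gamma$. Now fix $\alpha_0$. By Proposition~\ref{antezana}, $x_\beta=l(x_\beta)x_{\alpha_0}$ whenever $\beta\geq\alpha_0$, hence $x_\beta\xi=l(x_\beta)(x_{\alpha_0}\xi)\to q(x_{\alpha_0}\xi)$ for every $\xi\in\Hscr$; since $\{\beta\in\Gamma:\beta\geq\alpha_0\}$ is an upward-closed cofinal subset of $\Gamma$, this is the same as saying $x_\alpha\to qx_{\alpha_0}$ strongly. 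Finally, $x_{\alpha_0}$ is an upper bound of $\{x_\beta:\beta\geq\alpha_0\}$, so Proposition~\ref{suprema and infima}(ii) gives $\inf_{\beta\geq\alpha_0}x_\beta=(\inf_{\beta\geq\alpha_0}l(x_\beta))x_{\alpha_0}=qx_{\alpha_0}$; and because this tail is cofinal and the net is decreasing, $\inf_{\alpha\in\Gamma}x_\alpha=\inf_{\beta\geq\alpha_0}x_\beta=qx_{\alpha_0}$. Hence $x_\alpha\to\inf_{\alpha\in\Gamma}x_\alpha$ strongly.

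I do not expect a genuine obstacle here: once the description of $\preceq$ via left supports (Proposition~\ref{antezana}) and the explicit formulas for suprema and infima (Proposition~\ref{suprema and infima}) are available, both parts are short. The only point that needs a little care is the bookkeeping in (ii) — replacing the net by a tail to obtain an upper bound, checking that this changes neither the relevant infimum of operators nor the infimum of the left supports, and noting that convergence along an upward-closed tail of $\Gamma$ is literally convergence along $\Gamma$.
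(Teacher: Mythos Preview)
Your proposal is correct and follows essentially the same route as the paper: reduce to the monotone convergence of $(l(x_\alpha))$ in $P(\Mcal)$, write $x_\alpha=l(x_\alpha)y$ for a common upper bound $y$ via Proposition~\ref{antezana}, and invoke Proposition~\ref{suprema and infima} to identify the limit with the supremum/infimum. The only cosmetic difference is that in (ii) the paper first replaces the net by a tail and then uses a generic upper bound $y$, whereas you explicitly take $y=x_{\alpha_0}$; this is the same argument.
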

		\begin{proof}
		\hfill{}
				\begin{enumerate}
						\item By Proposition~\ref{antezana}, $(l(x_\alpha))_{\alpha\in\Gamma}$ is an increasing net of projections and so it has the strong limit, say $p$, which is the supremum of $(l(x_\alpha))_{\alpha\in\Gamma}$ in $P(\Mcal)$ (see \cite[Proposition 2.5.6]{Ka97I}). Let $y$ be an upper bound of $(x_\alpha)_{\alpha\in\Gamma}$. We infer from Proposition~\ref{suprema and infima} that the supremum of $(x_\alpha)_{\alpha\in\Gamma}$ is $p y$. Applying Proposition~\ref{antezana}, $x_\alpha=l(x_\alpha)y$ for all $\alpha\in\Gamma$. Since multiplication is separately continuous in the strong (operator) topology, we see that the net $(l(x_\alpha) y)_{\alpha\in\Gamma}=(x_\alpha)_{\alpha\in\Gamma}$ is strongly convergent to $py$.
						
						\item We can assume without loss of generality that $(x_\alpha)_{\alpha\in\Gamma}$ is bounded above. If $(x_\alpha)_{\alpha\in\Gamma}$ is not bounded above, we take an fixed element $\alpha_0\in\Gamma$ and consider $(x_\alpha)_{\alpha\in\Lambda}$, where $\Lambda=\{\alpha\in\Gamma|\,\alpha_0\leq\alpha\}$. The net $(x_\alpha)_{\alpha\in\Lambda}$ is bounded above by $x_{\alpha_0}$ because $(x_\alpha)_{\alpha\in\Gamma}$ is decreasing. It is easy to see that $(x_\alpha)_{\alpha\in\Lambda}$ has the same set of all lower bounds as the net $(x_\alpha)_{\alpha\in\Gamma}$. Moreover, $(x_\alpha)_{\alpha\in\Gamma}$ is strongly convergent to $x$ \ifff{} $(x_\alpha)_{\alpha\in\Lambda}$ is strongly convergent to $x$.
						
						The following discussion is analogous to that of the proof of (i). By Proposition~\ref{antezana}, $(l(x_\alpha))_{\alpha\in\Gamma}$ is a decreasing net of projections and so it has the strong limit, say $p$, which is the infimum of $(l(x_\alpha))_{\alpha\in\Gamma}$ in $P(\Mcal)$ (see \cite[Corollary~2.5.7]{Ka97I}). Let $y$ be an upper bound of $(x_\alpha)_{\alpha\in\Gamma}$. We infer from Proposition~\ref{suprema and infima} that the infimum of $(x_\alpha)_{\alpha\in\Gamma}$ is $p y$. Applying Proposition~\ref{antezana}, $x_\alpha=l(x_\alpha)y$ for all $\alpha\in\Gamma$. Since multiplication is separately continuous in the strong (operator) topology, we see that the net $(l(x_\alpha) y)_{\alpha\in\Gamma}=(x_\alpha)_{\alpha\in\Gamma}$ is strongly convergent to $py$.
				\end{enumerate}
		\end{proof}

\section{Comparison of topologies}

\begin{lem}\label{norm inequality}
Let $x$ and $y$ be elements of a \vNa{} \Mcal. If $x\preceq y$, then $\|x\|\leq\|y\|$.
\end{lem}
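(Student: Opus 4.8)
The plan is to use the orthogonality description of the star order recalled in Section~2, namely that $x\preceq y$ if and only if there is $z\in\Mcal$ such that $x$ and $z$ are *-orthogonal and $y=x+z$. Fix such a $z$. From $x^*z=0$ we get, for every vector $\xi\in\Hscr$, that $\langle x\xi,z\xi\rangle=\langle\xi,x^*z\xi\rangle=0$, and hence also $\langle z\xi,x\xi\rangle=0$. Consequently
$$
\|y\xi\|^2=\|x\xi+z\xi\|^2=\|x\xi\|^2+\|z\xi\|^2\geq\|x\xi\|^2 .
$$
So $\|x\xi\|\leq\|y\xi\|$ for all $\xi\in\Hscr$, and taking the supremum over the unit ball of $\Hscr$ gives $\|x\|\leq\|y\|$.

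An even shorter route, which I would mention as an alternative, is to invoke Proposition~\ref{antezana}: $x\preceq y$ forces $x=l(x)y$. Since $l(x)$ is a projection we have $\|l(x)\|\leq 1$, so $\|x\|=\|l(x)y\|\leq\|l(x)\|\,\|y\|\leq\|y\|$ (the case $x=0$ being trivial, and otherwise $\|l(x)\|=1$).

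There is essentially no obstacle: both arguments are a couple of lines, and the only points worth a moment's care are the trivial case $x=0$ in the second argument and the elementary fact that a projection has norm at most $1$. I would present the first argument as the main proof, since it is self-contained and uses only the *-orthogonal decomposition, and note the Proposition~\ref{antezana} shortcut in passing.
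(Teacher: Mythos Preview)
Your proof is correct, and both routes you give work. The paper, however, takes yet a third (and slightly shorter) path: it uses the defining equation $x^*x=x^*y$ directly, together with the $C^*$-identity $\|x\|^2=\|x^*x\|$, to get
\[
\|x\|^2=\|x^*x\|=\|x^*y\|\leq\|x^*\|\,\|y\|=\|x\|\,\|y\|,
\]
from which $\|x\|\leq\|y\|$ follows. Compared with your main argument, the paper's proof is purely \Cac{} and never appeals to the underlying Hilbert space or to the Pythagorean splitting of $\|y\xi\|^2$; compared with your alternative, it avoids invoking Proposition~\ref{antezana}. Conversely, your Hilbert-space argument actually proves the stronger pointwise inequality $\|x\xi\|\leq\|y\xi\|$ for every $\xi\in\Hscr$, which is a bit more information than the paper extracts.
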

		\begin{proof}
If $x\preceq y$, then $x^*x=x^*y$. Thus $\|x\|^2=\|x^*y\|\leq \|x^*\|\|y\|=\|x\|\|y\|$. It follows from this that $\|x\|\leq\|y\|$.
		\end{proof}

The previous lemma shows that every bounded subset of a \vNa{} with respect to the star order is necessarily norm bounded. The converse is clearly not true because, for example, the set $\{\unit, 2\unit\}$ is norm bounded but it is not bounded above with respect to the star order.

We have seen that there is a close relationship between strong topology and (star) order convergence. This motivates the question whether the relative topology $\tau_s|_{\Mcal}$ on a \vNa{} \Mcal{} is comparable with the order topology $\tau_o(\Mcal,\preceq)$.

\begin{pro}\label{sot}
Let $(x_\alpha)_{\alpha\in\Gamma}$ be a net in a \vNa{} \Mcal{} and let $x\in\Mcal$. If $x_\alpha\stackrel{o}{\to} x$ in $(\Mcal,\preceq)$, then $x_\alpha\stackrel{\tau_s}{\to} x$. In particular, $\tau_s|_{\Mcal}\subseteq \tau_o(\Mcal,\preceq)$.
\end{pro}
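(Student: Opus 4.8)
The plan is to squeeze $(x_\alpha)$ between two monotone nets and apply Theorem~\ref{monotone nets}, controlling the ``gap'' by a Pythagorean identity coming from *-orthogonality. Assume $x_\alpha\stackrel{o}{\to}x$ in $(\Mcal,\preceq)$; by definition there are nets $(y_\alpha)_{\alpha\in\Gamma}$ and $(z_\alpha)_{\alpha\in\Gamma}$ in $\Mcal$ with $y_\alpha\preceq x_\alpha\preceq z_\alpha$ for all $\alpha$, $y_\alpha\uparrow x$, and $z_\alpha\downarrow x$. First I would observe that $(y_\alpha)_{\alpha\in\Gamma}$ is increasing and bounded above (by $x$, which is its supremum), so by Theorem~\ref{monotone nets}(i) it converges strongly to $x$; likewise $(z_\alpha)_{\alpha\in\Gamma}$ is decreasing, so by Theorem~\ref{monotone nets}(ii) it converges strongly to its infimum $x$. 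Hence $\|(y_\alpha-x)\xi\|\to 0$ and $\|(z_\alpha-x)\xi\|\to 0$ for every $\xi\in\Hscr$.

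Next I would use the description of $\preceq$ via *-orthogonality recalled in Section~2: from $y_\alpha\preceq x_\alpha$ there is $a_\alpha\in\Mcal$ with $x_\alpha=y_\alpha+a_\alpha$ and $y_\alpha^*a_\alpha=a_\alpha y_\alpha^*=0$, and from $x_\alpha\preceq z_\alpha$ there is $b_\alpha\in\Mcal$ with $z_\alpha=x_\alpha+b_\alpha$ and $x_\alpha^*b_\alpha=b_\alpha x_\alpha^*=0$. Passing to adjoints gives $a_\alpha^*y_\alpha=0$ and $b_\alpha^*x_\alpha=0$, so for each $\xi\in\Hscr$ the mixed terms vanish and
$$
\|x_\alpha\xi\|^2=\|y_\alpha\xi\|^2+\|a_\alpha\xi\|^2,\qquad \|z_\alpha\xi\|^2=\|x_\alpha\xi\|^2+\|b_\alpha\xi\|^2 .
$$
In particular $\|y_\alpha\xi\|\le\|x_\alpha\xi\|\le\|z_\alpha\xi\|$, and therefore
$$
\|a_\alpha\xi\|^2=\|x_\alpha\xi\|^2-\|y_\alpha\xi\|^2\le\|z_\alpha\xi\|^2-\|y_\alpha\xi\|^2\longrightarrow\|x\xi\|^2-\|x\xi\|^2=0
$$
by the strong convergence established above; thus $\|a_\alpha\xi\|\to 0$.

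Finally, since $x_\alpha-x=(y_\alpha-x)+a_\alpha$, the triangle inequality yields $\|(x_\alpha-x)\xi\|\le\|(y_\alpha-x)\xi\|+\|a_\alpha\xi\|\to 0$ for every $\xi\in\Hscr$, i.e.\ $x_\alpha\stackrel{\tau_s}{\to}x$. The inclusion $\tau_s|_{\Mcal}\subseteq\tau_o(\Mcal,\preceq)$ then follows because, as recalled in the introduction, $\tau_o(\Mcal,\preceq)$ is the finest topology preserving order convergence. I do not foresee a genuine obstacle here: the only substantial ingredient is Theorem~\ref{monotone nets}, and after that everything reduces to the elementary norm bookkeeping above; the one point to watch is that the squeezing nets need not be norm bounded a priori, but bounding $(y_\alpha)$ above by $x$ and using that $(z_\alpha)$ is decreasing is exactly what Theorem~\ref{monotone nets} requires.
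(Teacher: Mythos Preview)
Your argument is correct. Both your proof and the paper's start the same way: pick the squeezing nets $(y_\alpha)$ and $(z_\alpha)$, invoke Theorem~\ref{monotone nets} to get $y_\alpha\stackrel{\tau_s}{\to}x$ and $z_\alpha\stackrel{\tau_s}{\to}x$, and then reduce the problem to showing $\|(x_\alpha-y_\alpha)\xi\|\to 0$. The difference lies in how this last step is handled. The paper observes that $x_\alpha-y_\alpha\preceq z_\alpha$, writes $(x_\alpha-y_\alpha)^*(x_\alpha-y_\alpha)=(x_\alpha-y_\alpha)^*z_\alpha$, and then uses Cauchy--Schwarz together with $(x_\alpha-y_\alpha)^*y_\alpha=0$ to get
\[
\|(x_\alpha-y_\alpha)\xi\|^2\le\|x_\alpha-y_\alpha\|\,\|(z_\alpha-y_\alpha)\xi\|\,\|\xi\|;
\]
this forces a restriction to a cofinal set $\Lambda=\{\alpha\ge\alpha_0\}$ and an appeal to Lemma~\ref{norm inequality} to bound $\|x_\alpha-y_\alpha\|\le\|z_{\alpha_0}\|$ uniformly. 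Your route is more direct: the Pythagorean identities $\|x_\alpha\xi\|^2=\|y_\alpha\xi\|^2+\|a_\alpha\xi\|^2$ and $\|z_\alpha\xi\|^2=\|x_\alpha\xi\|^2+\|b_\alpha\xi\|^2$ immediately give $\|a_\alpha\xi\|^2\le\|z_\alpha\xi\|^2-\|y_\alpha\xi\|^2\to 0$, with no operator-norm bound and no passage to a cofinal subnet needed. So your proof is a genuine simplification of the paper's: same key lemma (Theorem~\ref{monotone nets}), but the endgame avoids both Lemma~\ref{norm inequality} and the index-set restriction.
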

		\begin{proof}
Let $(x_\alpha)_{\alpha\in\Gamma}$ be a net in \Mcal{} such that $x_\alpha\stackrel{o}{\to} x$ in $(\Mcal,\preceq)$. Then there are nets $(y_\alpha)_{\alpha\in\Gamma}$ and $(z_{\alpha})_{\alpha\in\Gamma}$ in $(\Mcal,\preceq)$ such that $y_\alpha\preceq x_\alpha\preceq z_\alpha$ for all $\alpha\in \Gamma$, $y_\alpha\uparrow x$, and $z_\alpha\downarrow x$. Let $\alpha_0$ be an fixed element of $\Gamma$ and let $\Lambda=\{\alpha\in\Gamma|\,\alpha_0\leq\alpha\}$. To investigate strong convergence of $(x_\alpha)_{\alpha\in\Gamma}$ it is sufficient to consider the net $(x_\alpha)_{\alpha\in\Lambda}$ in place of $(x_\alpha)_{\alpha\in\Gamma}$. Because $(y_\alpha)_{\alpha\in\Lambda}$ is increasing and bounded above by $x$ in $(\Mcal,\preceq)$ and $(z_\alpha)_{\alpha\in\Lambda}$ is decreasing in $(\Mcal,\preceq)$, we obtain from Theorem~\ref{monotone nets} that $y_\alpha\stackrel{\tau_s}{\to} x$ and $z_\alpha\stackrel{\tau_s}{\to} x$. Let $\xi$ be an element of the underlying Hilbert space. Clearly,
		$$\|x_\alpha\xi-x\xi\|=\|x_\alpha\xi-y_\alpha\xi+y_\alpha\xi-x\xi\|\leq\|x_\alpha\xi-y_\alpha\xi\|+\|y_\alpha\xi-x\xi\|.$$
Since $y_\alpha\stackrel{\tau_s}{\to} x$, it is sufficient to prove that $\|x_\alpha\xi-y_\alpha\xi\|\to 0$. One can easily verify that $y_\alpha\preceq x_\alpha$ implies $x_\alpha-y_\alpha\preceq x_\alpha$ and so $x_\alpha-y_\alpha\preceq z_\alpha$. Hence $(x_\alpha-y_\alpha)^*(x_\alpha-y_\alpha)=(x_\alpha-y_\alpha)^*z_\alpha$. By this and Cauchy-Schwarz inequality,
\begin{eqnarray*}
		\|x_\alpha\xi-y_\alpha\xi\|^2 &=& \left\langle x_\alpha\xi-y_\alpha\xi,x_\alpha\xi-y_\alpha\xi\right\rangle=
																			\left\langle (x_\alpha-y_\alpha)^*(x_\alpha-y_\alpha)\xi,\xi\right\rangle\\
																	&\leq& \|(x_\alpha-y_\alpha)^*(x_\alpha-y_\alpha)\xi\|\|\xi\|
																	 =\|(x_\alpha-y_\alpha)^*z_\alpha\xi\|\|\xi\|\\
																	&=&\|(x_\alpha-y_\alpha)^*(z_\alpha-y_\alpha+y_\alpha)\xi\|\|\xi\|\\
																	&=&\|(x_\alpha-y_\alpha)^*(z_\alpha-y_\alpha)\xi+(x_\alpha-y_\alpha)^*y_\alpha\xi\|\|\xi\|\\
																	&=&\|(x_\alpha-y_\alpha)^*(z_\alpha-y_\alpha)\xi\|\|\xi\|
																	 \leq \|x_\alpha-y_\alpha\|\|z_\alpha\xi-y_\alpha\xi\|\|\xi\|,
\end{eqnarray*}
where we have used the equality $y_\alpha^*y_\alpha=x_\alpha^*y_\alpha$ which follows directly from $y_\alpha\preceq x_\alpha$.
Moreover, since $x_\alpha-y_\alpha\preceq z_\alpha\preceq z_{\alpha_0}$ for all $\alpha\in\Lambda$, we obtain from Lemma~\ref{norm inequality} that $\|x_\alpha-y_\alpha\|\leq \|z_\alpha\|\leq\|z_{\alpha_0}\|$ for all $\alpha\in\Lambda$. Applying what we have just shown,
		$$\|x_\alpha\xi-y_\alpha\xi\| \leq \|x_\alpha-y_\alpha\|^{\frac{1}{2}}\|z_\alpha\xi-y_\alpha\xi\|^{\frac{1}{2}}\|\xi\|^{\frac{1}{2}}\leq \|z_{\alpha_0}\|^{\frac{1}{2}}\|z_\alpha\xi-y_\alpha\xi\|^{\frac{1}{2}}\|\xi\|^{\frac{1}{2}}\to 0.
		$$
Accordingly, $(x_\alpha)_{\alpha\in\Lambda}$ converges strongly to $x$, whence $(x_\alpha)_{\alpha\in\Gamma}$ converges strongly to $x$.

		The inclusion $\tau_s|_{\Mcal}\subseteq\tau_o(\Mcal,\preceq)$ is an immediate consequence of the statement just proved.
		\end{proof}

The fact that the order topology $\tau_o(\Mcal,\preceq)$ on a \vNa{} \Mcal{} is finer than the relative strong topology on \Mcal{} immediately implies that $\tau_o(\Mcal,\preceq)$ is Hausdorff.

\begin{lem}\label{involution}
The involution on a \vNa{} \Mcal{} is order continuous (i.e. $x_\alpha^*\stackrel{o}{\to} x^*$ in $(\Mcal,\preceq)$ whenever $x_\alpha\stackrel{o}{\to} x$ in $(\Mcal,\preceq)$).
\end{lem}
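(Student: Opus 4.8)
The plan is to observe that the involution is an order automorphism of $(\Mcal,\preceq)$ and that order automorphisms automatically preserve order convergence; the whole argument is then essentially formal.

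First I would check that $x\preceq y$ \ifff{} $x^*\preceq y^*$. If $x\preceq y$, then by definition $x^*x=x^*y$ and $xx^*=yx^*$; taking adjoints of these two identities gives $x^*x=y^*x$ and $xx^*=xy^*$, which are exactly the relations $(x^*)^*(x^*)=(x^*)^*(y^*)$ and $(x^*)(x^*)^*=(y^*)(x^*)^*$ defining $x^*\preceq y^*$. Since the involution is an involution, i.e. its own inverse, this shows that $x\mapsto x^*$ is an \emph{order-preserving} bijection of $(\Mcal,\preceq)$ onto itself, hence an order automorphism.

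Next I would record the elementary poset fact that an order automorphism $\f$ of a poset carries suprema to suprema and infima to infima whenever they exist: if $s=\sup S$ then $\f(s)$ is an upper bound of $\f(S)$, and for any upper bound $u$ of $\f(S)$ one has that $\f^{-1}(u)$ is an upper bound of $S$, so $\f^{-1}(u)\geq s$ and therefore $u\geq\f(s)$; the statement for infima is dual. Applying this to the involution: if $(y_\alpha)_{\alpha\in\Gamma}$ is increasing with $y_\alpha\uparrow x$, then $(y_\alpha^*)_{\alpha\in\Gamma}$ is increasing (order-preservation) with supremum $x^*$, i.e. $y_\alpha^*\uparrow x^*$; similarly $z_\alpha\downarrow x$ implies $z_\alpha^*\downarrow x^*$.

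Finally, assume $x_\alpha\stackrel{o}{\to}x$ in $(\Mcal,\preceq)$ and fix witnessing nets $(y_\alpha)_{\alpha\in\Gamma}$, $(z_\alpha)_{\alpha\in\Gamma}$ with $y_\alpha\preceq x_\alpha\preceq z_\alpha$ for all $\alpha$, $y_\alpha\uparrow x$, and $z_\alpha\downarrow x$. Applying the involution termwise gives $y_\alpha^*\preceq x_\alpha^*\preceq z_\alpha^*$ for all $\alpha$, while the previous step yields $y_\alpha^*\uparrow x^*$ and $z_\alpha^*\downarrow x^*$. Hence $x_\alpha^*\stackrel{o}{\to}x^*$ in $(\Mcal,\preceq)$, which is the assertion. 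There is no genuine obstacle: the only point demanding care is confirming that the involution is order-preserving (not order-reversing) for the star order — which the adjoint computation above settles — and the rest is just the functoriality of suprema and infima under order isomorphisms.
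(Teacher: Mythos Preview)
Your argument is correct and follows essentially the same approach as the paper: the paper's proof simply asserts that the involution preserves the star order and then applies the involution to the witnessing nets $(y_\alpha)$ and $(z_\alpha)$, obtaining $y_\alpha^*\uparrow x^*$ and $z_\alpha^*\downarrow x^*$ and hence $x_\alpha^*\stackrel{o}{\to}x^*$. You have merely made explicit the two points the paper leaves implicit, namely the adjoint computation showing $x\preceq y\iff x^*\preceq y^*$ and the elementary fact that an order automorphism preserves suprema and infima.
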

	\begin{proof}
Let $x_\alpha\stackrel{o}{\to} x$ in $(\Mcal,\preceq)$. This means that there are nets $(y_\alpha)_{\alpha\in\Gamma}$ and $(z_\alpha)_{\alpha\in\Gamma}$ in $(\Mcal,\preceq)$ such that $y_\alpha\preceq x_\alpha\preceq z_\alpha$ for all $\alpha\in\Gamma$, $y_\alpha\uparrow x$, and $z_\alpha\downarrow x$. Since the involution preserves the star order, we have $y_\alpha^*\preceq x_\alpha^*\preceq z_\alpha^*$ for all $\alpha\in\Gamma$, $y_\alpha^*\uparrow x^*$, and $z_\alpha^*\downarrow x^*$. It follows from definition of order convergence that $x_\alpha^*\stackrel{o}{\to} x^*$.
	\end{proof}

We have seen in Proposition~\ref{sot} that the (star) order topology is finer than relative strong topology. We observe, by Lemma~\ref{involution}, that if $(x_\alpha)_{\alpha\in\Gamma}$ is order convergent to $x$, then $(x_\alpha)_{\alpha\in\Gamma}$ and $(x_\alpha^*)_{\alpha\in\Gamma}$ are $\tau_s$-convergent to $x$ and $x^*$, respectively. Using this very restrictive (the involution is not continuous in $\tau_s$) necessary condition for order convergence in $(\Mcal,\preceq)$, we obtain a stronger result than Proposition~\ref{sot}. We prove that $\tau_o(\Mcal,\preceq)$ is finer than $\sigma$-strong* topology $s^*(\Mcal,\Mcal_*)$.

\begin{theo}
Let $(x_\alpha)_{\alpha\in\Gamma}$ be a net in a \vNa{} $\Mcal$ and let $x\in\Mcal$. If $x_\alpha\stackrel{o}{\to} x$ in $(\Mcal,\preceq)$, then $x_\alpha\stackrel{s^*(\Mcal,\Mcal_*)}{\to} x$. In particular, $s^*(\Mcal,\Mcal_*)\subseteq \tau_o(\Mcal,\preceq)$.
	\end{theo}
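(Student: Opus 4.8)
The plan is to bootstrap from Proposition~\ref{sot} by exploiting the order continuity of the involution. First I would apply Proposition~\ref{sot} directly to obtain $x_\alpha\stackrel{\tau_s}{\to}x$. Then, by Lemma~\ref{involution}, the net $(x_\alpha^*)_{\alpha\in\Gamma}$ is order convergent to $x^*$ in $(\Mcal,\preceq)$, so a second application of Proposition~\ref{sot} gives $x_\alpha^*\stackrel{\tau_s}{\to}x^*$. Since the strong* topology $\tau_{s^*}$ is generated by the seminorms $\xi\mapsto\sqrt{\|y\xi\|^2+\|y^*\xi\|^2}$, a net converges to $y$ in $\tau_{s^*}$ exactly when it converges to $y$ strongly and its adjoint converges to $y^*$ strongly; hence $x_\alpha\stackrel{\tau_{s^*}}{\to}x$ (in the relative topology $\tau_{s^*}|_\Mcal$).

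It remains to upgrade $\tau_{s^*}$-convergence to $s^*(\Mcal,\Mcal_*)$-convergence, and here the key point is norm boundedness. Fixing any $\alpha_0\in\Gamma$ and passing to the tail $\Lambda=\{\alpha\in\Gamma\,|\,\alpha_0\leq\alpha\}$, Lemma~\ref{limsup}(i) shows that $(x_\alpha)_{\alpha\in\Lambda}$ is order bounded in $(\Mcal,\preceq)$, and Lemma~\ref{norm inequality} then shows it is norm bounded. As recalled in the Preliminaries, $\tau_{s^*}$ and $s^*(\Mcal,\Mcal_*)$ coincide on every norm bounded subset of $\Mcal$, so from $x_\alpha\stackrel{\tau_{s^*}}{\to}x$ we deduce $x_\alpha\stackrel{s^*(\Mcal,\Mcal_*)}{\to}x$ along $\Lambda$, and since convergence of a net is determined by its cofinal tails, also along $\Gamma$.

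Finally, the topological inclusion $s^*(\Mcal,\Mcal_*)\subseteq\tau_o(\Mcal,\preceq)$ follows because, as noted in the Introduction, $\tau_o(\Mcal,\preceq)$ is the finest topology preserving order convergence: every net that is order convergent to $x$ is $s^*(\Mcal,\Mcal_*)$-convergent to $x$, so every $s^*(\Mcal,\Mcal_*)$-closed set is order closed.

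I do not expect a serious obstacle here, since the argument is a short chain of facts already established in the excerpt. The one place that requires care is the reduction to a norm bounded tail: one must invoke Lemma~\ref{limsup}(i) to guarantee that the tail is order bounded in $(\Mcal,\preceq)$ before Lemma~\ref{norm inequality} applies, and one must remember that passing to a cofinal tail changes neither the limit nor the fact of convergence in any topology.
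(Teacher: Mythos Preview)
Your proposal is correct and follows essentially the same route as the paper: apply Proposition~\ref{sot} to $(x_\alpha)$ and, via Lemma~\ref{involution}, to $(x_\alpha^*)$ to obtain $\tau_{s^*}$-convergence, then use Lemma~\ref{limsup}(i) together with Lemma~\ref{norm inequality} to reduce to a norm bounded tail on which $\tau_{s^*}$ and $s^*(\Mcal,\Mcal_*)$ agree. The only cosmetic difference is that the paper passes to the bounded tail at the outset rather than after establishing $\tau_{s^*}$-convergence.
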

		\begin{proof}
Suppose that $x_\alpha\stackrel{o}{\to}x$ in $(\Mcal,\preceq)$. By Lemma~\ref{limsup}, we can assume without loss of generality that $(x_\alpha)_{\alpha\in\Gamma}$ is bounded in $(\Mcal,\preceq)$. Proposition~\ref{sot} yields $x_\alpha\stackrel{\tau_s}{\to}x$. Combining Proposition~\ref{sot} with Lemma~\ref{involution}, we see that $x_\alpha^*\stackrel{\tau_s}{\to}x^*$. Hence 
$$\left(\|x_\alpha\xi-x\xi\|^2+\|x_\alpha^*\xi-x^*\xi\|^2\right)^\frac{1}{2}\to 0$$
for all $\xi\in\Hscr$, where \Hscr{} is the underlying Hilbert space. Thus $x_\alpha\stackrel{\tau_{s^*}}{\to}x$.

According to Lemma~\ref{norm inequality}, the net $(x_\alpha)_{\alpha\in\Gamma}$ is norm bounded. Moreover, it is well known that topologies $\tau_{s^*}$ and $s^*(\Mcal,\Mcal_*)$ coincide on every norm bounded subset of $\Mcal$. Hence $x_\alpha\stackrel{s^*(\Mcal,\Mcal_*)}{\to} x$. 

The fact $s^*(\Mcal,\Mcal_*)\subseteq \tau_o(\Mcal,\preceq)$ follows directly from what we have just proved.
		\end{proof}

\begin{pro}\label{constant net}
Let $x$ and $y$ be elements of a \vNa{} \Mcal.
		If $x$ is invertible and $x\preceq y$, then $x=y$. Consequently, every order convergent net in $\Mcal_{inv}$ is constant.
\end{pro}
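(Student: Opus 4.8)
The plan is to prove the first assertion by a direct computation using the characterization of the star order, and then deduce the statement about nets from it. Suppose $x$ is invertible and $x \preceq y$. By definition this means $x^*x = x^*y$ and $xx^* = yx^*$. Since $x$ is invertible, so is $x^*$, hence $x^*$ is left-cancellable: from $x^*x = x^*y$ we obtain $x = y$ immediately. (Alternatively, one can invoke Proposition~\ref{antezana}: if $x$ is invertible then $l(x) = \unit$, so $x \preceq y$ forces $x = l(x)y = y$.) This handles the first claim with essentially no work.

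For the consequence, suppose $(x_\alpha)_{\alpha\in\Gamma}$ is a net in $\Mcal_{inv}$ that is order convergent to some $x$ in $(\Mcal,\preceq)$. By definition of order convergence there are nets $(y_\alpha)$ and $(z_\alpha)$ with $y_\alpha \preceq x_\alpha \preceq z_\alpha$ for all $\alpha$, $y_\alpha \uparrow x$, and $z_\alpha \downarrow x$. The key observation is that $x_\alpha \preceq z_\alpha$ together with the invertibility of $x_\alpha$ gives, by the first part of the proposition (applied with $x_\alpha$ in the role of $x$ and $z_\alpha$ in the role of $y$), that $x_\alpha = z_\alpha$ for every $\alpha$. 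Hence the net $(x_\alpha) = (z_\alpha)$ is decreasing with infimum $x$. Similarly, from $y_\alpha \preceq x_\alpha$ one would like to conclude $y_\alpha = x_\alpha$, but this requires knowing $y_\alpha$ is invertible, which is not given; so instead I argue directly that a decreasing net in $\Mcal_{inv}$ that is order convergent must be constant.

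So it remains to show: if $(x_\alpha)_{\alpha\in\Gamma}$ is decreasing in $(\Mcal,\preceq)$ and order convergent to $x$, and every $x_\alpha$ is invertible, then the net is constant. Fix indices $\alpha \leq \beta$. Then $x_\beta \preceq x_\alpha$, and since $x_\beta$ is invertible, the first part of the proposition yields $x_\beta = x_\alpha$. As this holds for all comparable pairs and $\Gamma$ is directed, the net is constant (any two $x_\alpha, x_{\alpha'}$ agree with $x_\gamma$ for a common upper bound $\gamma$). This actually shows the stronger fact that \emph{any} decreasing net of invertible elements in $(\Mcal,\preceq)$ is constant, and combined with the previous paragraph — where we showed an order convergent net in $\Mcal_{inv}$ coincides with its dominating decreasing net $(z_\alpha)$ — we need $(z_\alpha)$ to consist of invertibles; but $z_\alpha = x_\alpha$ by the previous paragraph, which is invertible, so indeed $(z_\alpha) = (x_\alpha)$ is a decreasing net of invertibles, hence constant.

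The main (and only) subtlety is the bookkeeping in the last two paragraphs: one must be careful that invertibility is available exactly where the cancellation argument is applied, i.e. on the element playing the role of the smaller one in the relation $\cdot \preceq \cdot$. Once one notices that $x_\alpha \preceq z_\alpha$ with $x_\alpha$ invertible forces $z_\alpha = x_\alpha$, everything collapses: the dominating net equals the original net, is therefore decreasing and consists of invertibles, and is thus constant by the cancellation argument on comparable pairs; its infimum $x$ then equals the common value, so the original net is constant. No topology beyond the definition of order convergence is needed.
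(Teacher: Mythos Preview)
Your proof is correct and follows essentially the same route as the paper: cancel $x^*$ from $x^*x=x^*y$ to get $x=y$; then, for an order convergent net in $\Mcal_{inv}$, use $x_\alpha\preceq z_\alpha$ with $x_\alpha$ invertible to identify $(x_\alpha)$ with the decreasing net $(z_\alpha)$, and finally use directedness plus the first part to conclude constancy. The paper's write-up is more terse (it never introduces the increasing net $(y_\alpha)$ at all), but the argument is the same.
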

	\begin{proof}
It follows directly from the definition of the star order that $x=y$ whenever $x$ is invertible and $x\preceq y$.
	
Let $(x_\alpha)_{\alpha\in\Gamma}$ be an order convergent net of invertible elements of \Mcal{}.
Then there is a decreasing net $(z_{\alpha})_{\alpha\in\Gamma}$ in $(\Mcal,\preceq)$ such that 
$x_\alpha\preceq z_\alpha$ for all $\alpha\in \Gamma$. The 
invertibility of elements $x_\alpha$ ensures that $x_\alpha=z_\alpha$ for all $\alpha\in\Gamma$. Therefore, $(x_\alpha)_{\alpha\in\Gamma}$ is decreasing in $(\Mcal,\preceq)$.
Let $\alpha,\beta\in\Gamma$ be arbitrary. Then there is $\gamma\in\Gamma$ such that $\alpha,\beta\leq \gamma$. 
Hence $x_\gamma\preceq x_\alpha,x_\beta$ and so $x_\alpha=x_\gamma=x_\beta$ because of invertibility of $x_\gamma$. 
	\end{proof}

\begin{cor}
Let $\Mcal$ be a \vNa.
	\begin{enumerate}
		\item The set $\Mcal_{inv}$ is closed in $\tau_o(\Mcal,\preceq)$.
		\item Topology $\tau_o(\Mcal_{inv},\preceq)$ is discrete and $\tau_o(\Mcal_{inv},\preceq)=\tau_o(\Mcal,\preceq)|_{\Mcal_{inv}}$.
	\end{enumerate}
\end{cor}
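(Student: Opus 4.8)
The plan is to derive every assertion from Proposition~\ref{constant net}, using in addition Corollary~\ref{supinf1}(i) and Proposition~\ref{Dedekind}. The whole proof is short because invertible elements are $\preceq$-maximal, which makes order convergence among them extremely rigid; the only thing requiring a little care is keeping track of whether order convergence is computed in $(\Mcal,\preceq)$ or in $(\Mcal_{inv},\preceq)$.

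\emph{Part (i).} First I would check that $\Mcal_{inv}$ is order closed in $(\Mcal,\preceq)$. Let $(x_\alpha)_{\alpha\in\Gamma}$ be a net in $\Mcal_{inv}$ that is order convergent to some $x\in\Mcal$ in $(\Mcal,\preceq)$. By the ``consequently'' part of Proposition~\ref{constant net}, the net is constant, say $x_\alpha=c\in\Mcal_{inv}$ for all $\alpha$; since a constant net is order convergent to its value and order limits are unique (as noted in the introduction), $x=c\in\Mcal_{inv}$. Hence no net in $\Mcal_{inv}$ order converges to a point of $\Mcal\setminus\Mcal_{inv}$, i.e.\ $\Mcal_{inv}$ is closed in $\tau_o(\Mcal,\preceq)$.

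\emph{Discreteness in Part (ii).} Next I would show that every net in $\Mcal_{inv}$ which is order convergent \emph{in the poset} $(\Mcal_{inv},\preceq)$ is constant and equal to its limit. If $(x_\alpha)_{\alpha\in\Gamma}$ order converges to $x$ in $(\Mcal_{inv},\preceq)$, pick witnessing nets $(y_\alpha)_{\alpha\in\Gamma}$, $(z_\alpha)_{\alpha\in\Gamma}$ in $\Mcal_{inv}$ with $y_\alpha\preceq x_\alpha\preceq z_\alpha$, $y_\alpha\uparrow x$, $z_\alpha\downarrow x$ (suprema and infima taken in $\Mcal_{inv}$). Since $y_\alpha$, $x_\alpha$ and $z_\alpha$ are invertible, the first assertion of Proposition~\ref{constant net} forces $y_\alpha=x_\alpha$ and $x_\alpha=z_\alpha$ for every $\alpha$. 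Then monotonicity of $(y_\alpha)$ (increasing) and of $(z_\alpha)$ (decreasing) gives, for $\alpha\le\beta$, both $x_\alpha\preceq x_\beta$ and $x_\beta\preceq x_\alpha$, so $x_\alpha=x_\beta$ by antisymmetry; directedness of $\Gamma$ then makes $(x_\alpha)$ constant, with common value $x=\sup_\alpha y_\alpha$. Consequently, for any $M\subseteq\Mcal_{inv}$, any net in $M$ that order converges in $(\Mcal_{inv},\preceq)$ has its limit in $M$, so $M$ is order closed; hence every subset of $\Mcal_{inv}$ is $\tau_o(\Mcal_{inv},\preceq)$-closed, i.e.\ $\tau_o(\Mcal_{inv},\preceq)$ is discrete.

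\emph{The equality in Part (ii).} Finally I would obtain $\tau_o(\Mcal_{inv},\preceq)=\tau_o(\Mcal,\preceq)|_{\Mcal_{inv}}$ from Proposition~\ref{Dedekind} applied with $P=\Mcal$ and $P_0=\Mcal_{inv}$. Indeed, $(\Mcal,\preceq)$ is Dedekind complete by Corollary~\ref{supinf1}(i), and $\Mcal_{inv}$ is closed in $\tau_o(\Mcal,\preceq)$ by Part~(i). The supremum hypothesis of Proposition~\ref{Dedekind} holds trivially: if $M\subseteq\Mcal_{inv}$ is nonempty with an upper bound $y\in\Mcal$, then for each $x\in M$ we have $x$ invertible and $x\preceq y$, whence $x=y$ by the first part of Proposition~\ref{constant net}; thus $M=\{y\}$ with $y=x\in\Mcal_{inv}$, and $\sup M=y\in\Mcal_{inv}$. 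Proposition~\ref{Dedekind} then yields the desired equality, which combined with the previous paragraph also shows $\tau_o(\Mcal,\preceq)|_{\Mcal_{inv}}$ is discrete. I do not expect any genuine obstacle; the only subtlety, handled above, is that discreteness must be argued directly inside $(\Mcal_{inv},\preceq)$ rather than by transporting order convergence from $(\Mcal,\preceq)$, since suprema and infima in the two posets need not agree a priori.
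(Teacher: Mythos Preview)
Your proof is correct and follows essentially the same route as the paper: Part~(i) via Proposition~\ref{constant net}, discreteness from the fact that invertibles are $\preceq$-maximal, and the equality via Proposition~\ref{Dedekind} together with Corollary~\ref{supinf1}(i) and the singleton observation. The only difference is one of care rather than substance: the paper cites Proposition~\ref{constant net} directly for discreteness of $\tau_o(\Mcal_{inv},\preceq)$, whereas you (rightly) rerun the argument inside $(\Mcal_{inv},\preceq)$ since Proposition~\ref{constant net} is literally stated for order convergence in $(\Mcal,\preceq)$; the underlying reasoning is identical.
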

	\begin{proof}\hfill{}
	\begin{enumerate}
		
		\item The fact that $\Mcal_{inv}$ is closed in $\tau_o(\Mcal,\preceq)$ is a direct consequence of Proposition~\ref{constant net}.
		
		\item If $M\subseteq\Mcal_{inv}$, then $M$ is closed in $\tau_o(\Mcal_{inv},\preceq)$ because of Proposition~\ref{constant net}. This proves that $\tau_o(\Mcal_{inv},\preceq)$ is discrete.
		
		Every nonempty subset of $\Mcal_{inv}$ which has an upper bound in $(\Mcal,\preceq)$ contains only one element. Therefore, the supremum of every nonempty subset of $\Mcal_{inv}$ with an upper bound in $(\Mcal,\preceq)$ belongs to $\Mcal_{inv}$. Combining (i), Corollary~\ref{supinf1}(i), and Proposition~\ref{Dedekind}, we obtain $\tau_o(\Mcal_{inv},\preceq)=\tau_o(\Mcal,\preceq)|_{\Mcal_{inv}}$.
	\end{enumerate}
	\end{proof}

\begin{cor}\label{comparison}
		The norm topology $\tau_u$ on a \vNa{} \Mcal{} is not finer than $\tau_o(\Mcal,\preceq)$.
\end{cor}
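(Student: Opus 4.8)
The plan is to combine two facts that are already at our disposal: $\Mcal_{inv}$ is closed in $\tau_o(\Mcal,\preceq)$, whereas $\Mcal_{inv}$ is manifestly \emph{not} closed in the norm topology. (Here $\Mcal$ is tacitly assumed to be nonzero, as usual; for the zero algebra the two topologies coincide and the statement is vacuous.)

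I would argue by contradiction. Suppose that $\tau_u$ is finer than $\tau_o(\Mcal,\preceq)$, that is, $\tau_o(\Mcal,\preceq)\subseteq\tau_u$; then every subset of $\Mcal$ that is closed in $\tau_o(\Mcal,\preceq)$ must be norm closed. By Proposition~\ref{constant net} the set $\Mcal_{inv}$ is closed in $\tau_o(\Mcal,\preceq)$, so it would have to be norm closed. But it is not: the sequence $\bigl(\tfrac1n\unit\bigr)_{n\in\nn}$ lies in $\Mcal_{inv}$ and converges in norm to $0\notin\Mcal_{inv}$. This contradiction completes the proof.

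Equivalently, one can present the argument without contradiction, directly in terms of convergence: the sequence $\bigl(\tfrac1n\unit\bigr)_{n\in\nn}$ converges to $0$ in $\tau_u$, but it cannot converge to $0$ in $\tau_o(\Mcal,\preceq)$, for otherwise the $\tau_o(\Mcal,\preceq)$-closed set $\Mcal_{inv}$ would contain the limit $0$ of a sequence all of whose terms belong to it. Hence $\tau_u$-convergence does not imply $\tau_o(\Mcal,\preceq)$-convergence, which is precisely the assertion that $\tau_u$ is not finer than $\tau_o(\Mcal,\preceq)$. I do not expect any genuine obstacle here: the entire content is already packaged in the behaviour of invertible elements established above, and the corollary is a one-line deduction once that is in hand.
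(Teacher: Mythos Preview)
Your proposal is correct and follows essentially the same route as the paper: both use the sequence $\bigl(\tfrac{1}{n}\unit\bigr)_{n\in\nn}$ together with the behaviour of invertible elements under the star order established in Proposition~\ref{constant net}. The only cosmetic difference is that the paper exhibits the set $M=\{\tfrac{1}{n}\unit\mid n\in\nn\}$ itself as order closed but not norm closed, whereas you use the larger set $\Mcal_{inv}$; the content is the same.
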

	\begin{proof}
Consider the set $M=\{\frac{1}{n}\unit|\,n\in\nn\}$. Since $M$ is a set of invertible elements, it is closed in $\tau_o(\Mcal,\preceq)$. However, $M$ is not closed in $\tau_u$.
	\end{proof}

We have seen that the norm topology is not finer than the order topology. Now, let us concentrate on the converse question whether the order topology is finer than the norm topology.

\begin{lem}\label{dimension}
Let \Mcal{} be a \vNa{}. The following statements are equivalent:
	\begin{enumerate}
		\item $\Mcal$ admits no infinite family $(p_\alpha)_{\alpha\in I}$ of mutually orthogonal nonzero projections with $\sup_{\alpha\in I} p_\alpha=\unit$.
		\item $\Mcal$ is finite-dimensional.
		\item $\Mcal$ is (isomorphic to) a finite direct sum of full matrix algebras.
	\end{enumerate}
\end{lem}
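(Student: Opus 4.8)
The plan is to prove the cycle (iii)\,$\Rightarrow$\,(ii)\,$\Rightarrow$\,(i)\,$\Rightarrow$\,(iii). The first implication is immediate, since a finite direct sum $M_{n_1}(\cc)\oplus\cdots\oplus M_{n_k}(\cc)$ has dimension $\sum_j n_j^2<\infty$. For (ii)\,$\Rightarrow$\,(i) I would observe that if $(p_\alpha)_{\alpha\in I}$ is a family of mutually orthogonal nonzero projections in \Mcal{}, then the $p_\alpha$ are linearly independent (multiplying a vanishing linear combination by $p_\beta$ isolates its coefficient), so $|I|\le\dim\Mcal$; hence a finite-dimensional \vNa{} admits no infinite such family at all, in particular none with $\sup_{\alpha\in I}p_\alpha=\unit$.

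The substance is in (i)\,$\Rightarrow$\,(iii). First I would upgrade the hypothesis of (i) to ``\Mcal{} contains no infinite family of mutually orthogonal nonzero projections whatsoever'': if $(p_\alpha)_{\alpha\in I}$ were such an infinite family, put $q=\sup_{\alpha\in I}p_\alpha$, fix $\alpha_0\in I$, and replace $p_{\alpha_0}$ by $p_{\alpha_0}+(\unit-q)$. Since $p_{\alpha_0}\le q$ this is again a nonzero projection, still orthogonal to every $p_\alpha$ with $\alpha\ne\alpha_0$, and the new family is infinite with supremum $\unit$, contradicting (i). Now use Zorn's lemma to pick a maximal family $e_1,\dots,e_n$ of mutually orthogonal nonzero projections in \Mcal{}; it is finite by the above, $\sum_{i=1}^n e_i=\unit$ by maximality, and each $e_i$ is a minimal projection of \Mcal{} (a nonzero projection strictly below $e_i$ could be used to enlarge the family).

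Next I would show $\dim\Mcal\le n^2$. For each $i$, the \vNa{} $e_i\Mcal e_i$ has unit $e_i$ and, by minimality of $e_i$, no projections other than $0$ and $e_i$; applying spectral theory to a self-adjoint element of $e_i\Mcal e_i$ then forces it to be a scalar multiple of $e_i$, so $e_i\Mcal e_i=\cc e_i$. For $i\ne j$, if $e_i\Mcal e_j\ne\{0\}$ choose $0\ne v\in e_i\Mcal e_j$; then $v^*v\in e_j\Mcal e_j=\cc e_j$ is nonzero and $vv^*\in e_i\Mcal e_i=\cc e_i$ is nonzero, and for any $w\in e_i\Mcal e_j$ one has $v^*w\in\cc e_j$, hence $vv^*w\in\cc v$, and since $vv^*$ is a nonzero multiple of $e_i$ and $w=e_iw$ this gives $w\in\cc v$; thus $\dim e_i\Mcal e_j\le 1$. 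Because $\unit=\sum_i e_i$ yields $\Mcal=\bigoplus_{i,j}e_i\Mcal e_j$, we conclude $\dim\Mcal\le n^2<\infty$, and the classical structure theorem for finite-dimensional \Ca{}s (every finite-dimensional \Ca{} is \Sic{} to a finite direct sum of full matrix algebras) gives (iii). I expect the corner analysis of this last paragraph — squeezing $e_i\Mcal e_i=\cc e_i$ and $\dim e_i\Mcal e_j\le 1$ out of minimality — to be the technical heart; the only genuinely external input is the Wedderburn-type classification of finite-dimensional \Ca{}s.
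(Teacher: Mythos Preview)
Your argument is correct. The paper's own proof is nothing but three citations: it sends (i)\,$\Rightarrow$\,(ii) to an exercise in Kadison--Ringrose, (ii)\,$\Rightarrow$\,(iii) to two structure results in the same reference, and declares (iii)\,$\Rightarrow$\,(i) clear. You run the cycle in the opposite direction and actually supply the content. Your (ii)\,$\Rightarrow$\,(i) via linear independence of orthogonal projections is the natural elementary observation; your (i)\,$\Rightarrow$\,(iii) --- the upgrade of the hypothesis, the Zorn step yielding finitely many minimal projections summing to $\unit$, and the Peirce-corner analysis giving $\dim e_i\Mcal e_j\le 1$ --- is precisely the sort of argument that the cited exercise would ask the reader to produce. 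The one piece of external input you retain, the Wedderburn-type classification of finite-dimensional \Ca{}s, is exactly what the paper's (ii)\,$\Rightarrow$\,(iii) citation amounts to, so on that point the two proofs are on equal footing. In short: same mathematical substance, but you make it explicit where the paper defers to the literature.
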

	\begin{proof}
From \cite[Exercise~5.7.39]{Ka91}, we have $(i)\Rightarrow(ii)$. It follows from \cite[Proposition~6.6.6]{Ka97II} and \cite[Theorem~6.6.1]{Ka97II} that $(ii)\Rightarrow(iii)$. The statement $(iii)\Rightarrow(i)$ is clear.
	\end{proof}

\begin{theo}
If a \vNa{} \Mcal{} is infinite-dimensional, then the set $\Mcal\setminus\Mcal_{inv}$ of all noninvertible elements in \Mcal{} is not order closed. In this case, the topology $\tau_o(\Mcal,\preceq)$ is not comparable with the norm topology $\tau_u(\Mcal)$.
\end{theo}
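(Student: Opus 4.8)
The plan is to exhibit, for an infinite-dimensional \Mcal{}, a net of non-invertible elements that order converges (with respect to the star order) to the identity $\unit$, which is invertible; this shows directly that $\Mcal\setminus\Mcal_{inv}$ is not order closed. Since $\Mcal\setminus\Mcal_{inv}$ is closed in the norm topology (because $\Mcal_{inv}$ is norm open, a standard Banach-algebra fact), this simultaneously yields that $\tau_o(\Mcal,\preceq)$ is not finer than $\tau_u(\Mcal)$; combining this with Corollary~\ref{comparison}, which gives that $\tau_u(\Mcal)$ is not finer than $\tau_o(\Mcal,\preceq)$, the two topologies are incomparable.

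For the construction, I would first invoke Lemma~\ref{dimension}: since \Mcal{} is infinite-dimensional, there is an infinite family $(p_\alpha)_{\alpha\in I}$ of mutually orthogonal nonzero projections in \Mcal{} with $\sup_{\alpha\in I}p_\alpha=\unit$. Let $\Gamma$ be the set of all finite subsets of $I$, directed by inclusion, and for $F\in\Gamma$ set $q_F=\sum_{\alpha\in F}p_\alpha\in P(\Mcal)$. Then $(q_F)_{F\in\Gamma}$ is an increasing net of projections (recall that $\leq$ and $\preceq$ agree on $P(\Mcal)$). Each $q_F$ is a proper projection: choosing $\beta\in I\setminus F$, which is possible because $I$ is infinite, we get $p_\beta\leq\unit-q_F$ with $p_\beta\neq0$, so $q_F\neq\unit$; hence $q_F$ has nonzero kernel and is not invertible, i.e. $q_F\in\Mcal\setminus\Mcal_{inv}$ for every $F\in\Gamma$.

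Next I would verify that $q_F\stackrel{o}{\to}\unit$ in $(\Mcal,\preceq)$. The one point needing care is that the supremum of $\{q_F:F\in\Gamma\}$ must be computed in $(\Mcal,\preceq)$, not merely in $P(\Mcal)$: since $\unit$ is an upper bound, Proposition~\ref{suprema and infima}(i) gives that this supremum is $\bigl(\sup_{F\in\Gamma}l(q_F)\bigr)\unit=\bigl(\sup_{F\in\Gamma}q_F\bigr)\unit=\unit$, where the last supremum is taken in $P(\Mcal)$. Thus $q_F\uparrow\unit$ in $(\Mcal,\preceq)$. Taking $y_F=q_F$ and the constant net $z_F=\unit$, one has $y_F\preceq q_F\preceq z_F$ for all $F$ (as $q_F\preceq\unit$), $y_F\uparrow\unit$, and $z_F\downarrow\unit$, so $q_F\stackrel{o}{\to}\unit$ in $(\Mcal,\preceq)$. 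Hence $\Mcal\setminus\Mcal_{inv}$ contains a net order converging to $\unit\in\Mcal_{inv}$ and is therefore not order closed. Finally, since $\Mcal\setminus\Mcal_{inv}$ is $\tau_u(\Mcal)$-closed but not $\tau_o(\Mcal,\preceq)$-closed, $\tau_o(\Mcal,\preceq)$ is not finer than $\tau_u(\Mcal)$, and together with Corollary~\ref{comparison} the incomparability follows. I do not expect a genuine obstacle here; the proof is short, and the only delicate step is ensuring the suprema and the order convergence are read off in $(\Mcal,\preceq)$, which is exactly what Proposition~\ref{suprema and infima} and Corollary~\ref{supinf1} allow.
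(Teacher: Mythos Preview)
Your proposal is correct and follows essentially the same approach as the paper: both construct the increasing net $q_F=\sum_{\alpha\in F}p_\alpha$ of proper projections indexed by finite subsets $F$ of an infinite orthogonal family, verify that $q_F\uparrow\unit$ in $(\Mcal,\preceq)$ (and hence $q_F\stackrel{o}{\to}\unit$), and conclude incomparability via norm-closedness of $\Mcal\setminus\Mcal_{inv}$ together with Corollary~\ref{comparison}. Your extra care in justifying that the supremum in $(\Mcal,\preceq)$ agrees with that in $P(\Mcal)$ via Proposition~\ref{suprema and infima} is a welcome addition that the paper leaves implicit.
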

	\begin{proof}
It follows from Lemma~\ref{dimension}	that there is an infinite family $(p_\alpha)_{\alpha\in I}$ of mutually orthogonal nonzero projections in \Mcal{} satisfying $\sup_{\alpha\in I} p_\alpha=\unit$.
The set $\Gamma$ consisting of all finite subsets of $I$ is directed by the inclusion relation. Consider the net $(x_F)_{F\in\Gamma}$ of projections 
$$
x_F=\sup_{\alpha\in F} p_\alpha=\sum_{\alpha\in F}p_\alpha. 
$$
It is easy to see that $(x_F)_{F\in\Gamma}$ is increasing. Moreover, if $F\in \Gamma$ and $\beta\in I\setminus F$, then 
$$
p_\beta x_F=p_\beta\sum_{\alpha\in F}p_\alpha=\sum_{\alpha\in F}p_\beta p_\alpha=0.
$$
Thus $x_F$ is not invertible for each $F\in\Gamma$. Furthermore, $x_F\preceq x_F\preceq\unit$ for every $F\in\Gamma$ and $\sup_{F\in\Gamma}x_F=\unit$. This shows that the net $(x_F)_{F\in\Gamma}$ of noninvertible projections order converges to $\unit$ in $(\Mcal,\preceq)$. Hence $\Mcal\setminus\Mcal_{inv}$ is not order closed in $(\Mcal,\preceq)$.

It remains to show that $\tau_o(\Mcal,\preceq)$ is not comparable with the norm topology $\tau_u(\Mcal)$. It follows from what we have proved above that $\tau_o(\Mcal,\preceq)$ is not finer than the norm topology $\tau_u(\Mcal)$ because the set $\Mcal\setminus\Mcal_{inv}$ is closed in the norm topology \cite[Proposition~3.1.6]{Ka97I}. In addition, by Corollary~\ref{comparison}, $\tau_u(\Mcal)$ is not finer than $\tau_o(\Mcal,\preceq)$.
	\end{proof}

In order to complete our discussion about comparison of the order topology $\tau_o(\Mcal,\preceq)$ on a \vNa{} \Mcal{} with the norm topology, we shall prove that if \Mcal{} is finite-dimensional, then the order topology $\tau_o(\Mcal,\preceq)$ is necessarily discrete and so it is strictly finer than the norm topology. 

\begin{theo}
If a \vNa{} \Mcal{} is finite-dimensional, then the order topology $\tau_o(\Mcal,\preceq)$ is discrete.
\end{theo}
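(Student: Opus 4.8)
The plan is to prove the stronger statement that \emph{every net in $\Mcal$ which is order convergent (with respect to $\preceq$) to a point $x$ is eventually equal to $x$}. Discreteness of $\tau_o(\Mcal,\preceq)$ then follows immediately: for each $x\in\Mcal$ the set $\Mcal\setminus\{x\}$ cannot contain a net order converging to $x$, so it is order closed, and hence $\{x\}$ is open.

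To establish the claim, suppose $(x_\alpha)_{\alpha\in\Gamma}$ satisfies $x_\alpha\stackrel{o}{\to}x$ and fix nets $(y_\alpha)_{\alpha\in\Gamma}$ and $(z_\alpha)_{\alpha\in\Gamma}$ with $y_\alpha\preceq x_\alpha\preceq z_\alpha$ for all $\alpha$, $y_\alpha\uparrow x$ and $z_\alpha\downarrow x$. First I would record the elementary consequence of Proposition~\ref{antezana} that $a\preceq b$ implies $l(a)\leq l(b)$ and $a=l(a)b$, so that $a\preceq b$ together with $l(a)=l(b)$ forces $a=l(b)b=b$. In particular, along the increasing net $(y_\alpha)$ the left supports $(l(y_\alpha))$ form an increasing net of projections, and $(l(z_\alpha))$ is a decreasing net of projections.

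The heart of the argument is that in a finite-dimensional $\Mcal$ a monotone net of projections is eventually constant. Indeed, by Lemma~\ref{dimension} the algebra $\Mcal$ is a finite direct sum of full matrix algebras, hence carries a faithful trace $\mathrm{Tr}$ (the direct sum of the usual matrix traces on the blocks) whose values on projections are nonnegative integers bounded by $\mathrm{Tr}(\unit)<\infty$, and for projections $p\leq q$ with $\mathrm{Tr}(p)=\mathrm{Tr}(q)$ faithfulness forces $p=q$. For an increasing net of projections the finitely many integers $\mathrm{Tr}(\cdot)$ attain their maximum at some index $\alpha_0$ and are therefore constant for $\alpha\geq\alpha_0$, whence the projections themselves are constant for $\alpha\geq\alpha_0$; the decreasing case is symmetric. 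Applying this to $(l(y_\alpha))$ and to $(l(z_\alpha))$ and then invoking the observation above (equal left supports force equality of $\preceq$-comparable elements), I conclude that $(y_\alpha)$ and $(z_\alpha)$ are each eventually constant; since $y_\alpha\uparrow x$ and $z_\alpha\downarrow x$, the eventual values must be $x$ in both cases. Choosing $\alpha_0\in\Gamma$ past the stabilization points of both nets, we get $x=y_\alpha\preceq x_\alpha\preceq z_\alpha=x$ for all $\alpha\geq\alpha_0$, so $x_\alpha=x$ there, which proves the claim.

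I do not anticipate a genuine obstacle. The only slightly delicate points are that order convergence is exploited solely through the sandwiching monotone nets, so the finite-dimensional stabilization of monotone projection nets does all the work, and that this stabilization must be phrased for nets over an arbitrary directed set (the maximum of the finitely many trace values is attained, and monotonicity gives constancy on the corresponding cofinal tail, whose constant value must then be the supremum, resp. infimum). If one preferred to avoid the trace, the same stabilization follows from the fact that a monotone net of projections converges strongly — hence, in finite dimensions, in norm — to its supremum or infimum, together with the fact that a projection at norm distance less than $1$ from a comparable projection coincides with it.
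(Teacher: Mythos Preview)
Your proposal is correct and follows essentially the same architecture as the paper's proof: both reduce to showing that monotone nets of projections in a finite-dimensional \vNa{} are eventually constant, then use Proposition~\ref{antezana} (equal left supports of $\preceq$-comparable elements force equality) to deduce that the $\preceq$-monotone sandwiching nets $(y_\alpha)$ and $(z_\alpha)$ stabilize at $x$, whence $x_\alpha=x$ eventually. The only difference is in the projection-stabilization step: the paper argues by contradiction, extracting from a non-eventually-constant increasing net an infinite family of mutually orthogonal nonzero projections (ruled out by Lemma~\ref{dimension}), whereas you use a faithful integer-valued trace on $\Mcal\cong\bigoplus_i M_{n_i}(\cc)$; both arguments are equally valid and yours is arguably a touch more direct.
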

	\begin{proof}
Since \Mcal{} is finite-dimensional, we see from Lemma~\ref{dimension} that there is no infinite family of mutually orthogonal nonzero projections. Then every projection in \Mcal{} has only a finite number of mutually orthogonal nonzero subprojections. 

We now prove that every increasing net of projections in $(\Mcal,\preceq)$ is eventually constant. Let $(p_\alpha)_{\alpha\in\Gamma}$ be an increasing net of projections $(\Mcal,\preceq)$. Suppose that $(p_\alpha)_{\alpha\in\Gamma}$ is not eventually constant. Then there is $\alpha_0\in\Gamma$ such that $p_{\alpha_0}\neq 0$. Since $(p_\alpha)_{\alpha\in\Gamma}$ is increasing and is not eventually constant, there is $\alpha_1\in\Gamma$ such that $\alpha_0\leq \alpha_1$ and $p_{\alpha_0}< p_{\alpha_1}$. Proceeding by induction, we obtain an increasing sequence $(\alpha_n)_{n\in\nn_0}$ in $\Gamma$ such that $p_{\alpha_m}<p_{\alpha_n}$ whenever $m,n\in\nn_0$ satisfy $m<n$. Set $e_0=p_{\alpha_0}$ and $e_{n+1}=p_{\alpha_{n+1}}-p_{\alpha_n}$ for all $n\in\nn_0$. Clearly, $(e_n)_{n\in\nn_0}$ is a sequence of mutually orthogonal nonzero projections. Thus the projection $\sup_{n\in\nn_0}e_n$ in \Mcal{} has infinite number of mutually orthogonal nonzero subprojections which is a contradiction. This proves that every increasing net of projections in $(\Mcal,\preceq)$ is eventually constant.

Let us show that every decreasing or increasing net in $(\Mcal,\preceq)$ is necessarily eventually constant. Assume that $(x_\alpha)_{\alpha\in \Gamma}$ is an increasing net in $(\Mcal,\preceq)$. By Proposition~\ref{antezana}, $(l(x_\alpha))_{\alpha\in\Gamma}$ is an increasing net of projections in $(\Mcal,\preceq)$ and so it is eventually constant. This means that there is $\alpha_0\in\Gamma$ such that $l(x_\alpha)=l(x_{\alpha_0})$ whenever $\alpha\in\Gamma$ is such that $\alpha_0\leq \alpha$. Employing Proposition~\ref{antezana}, $$
x_{\alpha_0}=l(x_{\alpha_0})x_\alpha=l(x_\alpha)x_\alpha=x_\alpha
$$ 
for every $\alpha\in \Gamma$ satisfying $\alpha_0\leq \alpha$. Now suppose that $(x_\alpha)_{\alpha\in \Gamma}$ is a decreasing net in $(\Mcal,\preceq)$. By Proposition~\ref{antezana}, $(\unit-l(x_\alpha))_{\alpha\in\Gamma}$ is an increasing net of projections in $(\Mcal,\preceq)$. Hence $(\unit-l(x_\alpha))_{\alpha\in\Gamma}$ is eventually constant which implies that $(l(x_\alpha))_{\alpha\in\Gamma}$ is eventually constant. Now it follows from a similar argument as in the case of an increasing net that $(x_\alpha)_{\alpha\in\Gamma}$ is eventually constant.

Let a net $(x_\alpha)_{\alpha\in\Gamma}$ in $M\subseteq \Mcal$ be order convergent to $x$ in $(\Mcal,\preceq)$. Then there are nets $(y_\alpha)_{\alpha\in\Gamma}$ and $(z_\alpha)_{\alpha\in\Gamma}$ in $(\Mcal,\preceq)$ such that $y_\alpha\preceq x_\alpha\preceq z_\alpha$ for every $\alpha\in\Gamma$, $y_\alpha\uparrow x$, and $z_\alpha\downarrow x$. By the previous part of the proof, $(y_\alpha)_{\alpha\in\Gamma}$ and $(z_\alpha)_{\alpha\in\Gamma}$ are eventually constant. Hence there is $\beta\in\Gamma$ such that $(y_\alpha)_{\alpha\in\Lambda}$ and $(z_\alpha)_{\alpha\in\Lambda}$, where $\Lambda=\{\alpha\in\Gamma|\,\beta\leq\alpha\}$, are constant nets. It follows from the arguments used in the proof of Lemma~\ref{limsup} that the supremum of $(y_\alpha)_{\alpha\in\Lambda}$ and the infimum of $(z_\alpha)_{\alpha\in\Lambda}$ are equal to $x$. We infer from this that $y_\beta=z_\beta=x$. Accordingly, $x_\beta=x$ because 
$$
x=y_\beta\preceq x_\beta\preceq z_\beta=x.
$$
We have proved that $x$ has to be an element of $M$. Thus every subset of \Mcal{} is order closed and so $\tau_o(\Mcal,\preceq)$ is discrete.
	\end{proof}

At the end of this section, we discuss relationships between topologies $\tau_o(\Mcal,\preceq)$, $\tau_o(\Mcal_{pi},\preceq)$, $\tau_o(\Mcal_{sa},\preceq)$, $\tau_o(\Mcal_+,\preceq)$, and $\tau_o(P(\Mcal),\preceq)$. We shall see in Corollary~\ref{relationships} that a relation between $\tau_o(\Mcal_{sa},\preceq)$ and $\tau_o(P(\Mcal),\preceq)$ is very different from order topologies generated by the standard order (see \cite[Proposition 2.9]{ChHW15}).

\begin{pro}
Let \Mcal{} be a \vNa. The sets $P(\Mcal)$, $\Mcal_+$, $\Mcal_{pi}$, and $\Mcal_{sa}$ are closed in $\tau_o(\Mcal,\preceq)$.
\end{pro}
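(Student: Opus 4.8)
The strategy is to show that each of the four sets is \emph{order closed}, that is, if a net in the set order converges (with respect to $\preceq$) to some $x\in\Mcal$, then $x$ already lies in the set. By Lemma~\ref{limsup}, whenever $x_\alpha\stackrel{o}{\to}x$ we may pass to a tail and assume the net is bounded in $(\Mcal,\preceq)$, so that $x=\liminf_\alpha x_\alpha=\limsup_\alpha x_\alpha$ with the $\liminf$ and $\limsup$ computed as iterated suprema and infima in the Dedekind complete poset $(\Mcal,\preceq)$ (using Corollary~\ref{supinf1}(i)). Concretely, if $(y_\alpha)$ and $(z_\alpha)$ witness the order convergence with $y_\alpha\preceq x_\alpha\preceq z_\alpha$, $y_\alpha\uparrow x$, $z_\alpha\downarrow x$, then after restricting to a tail we have $y_{\alpha_0}\preceq y_\alpha\preceq x_\alpha\preceq z_\alpha\preceq z_{\alpha_0}$, and the key point is that $x$ is obtained as a supremum (of a subset of the set containing the $x_\alpha$'s, modulo the comparison elements) or as an infimum; then invoke Corollaries~\ref{supinf1} and \ref{supinf2}.

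In more detail: for $P(\Mcal)$, $\Mcal_{sa}$, and $\Mcal_+$, I would argue via the sandwich $y_\alpha\preceq x_\alpha\preceq z_\alpha$. Since each $x_\alpha$ lies in the respective set $S\in\{P(\Mcal),\Mcal_{sa},\Mcal_+\}$ and $S$ is closed under the relevant passage ``$u\preceq v$, $v\in S$ implies $u\in S$'' only for $P(\Mcal)$, $\Mcal_+$, $\Mcal_{pi}$ (Lemma~\ref{smaller element}) — but \emph{not} for $\Mcal_{sa}$ — one has to be a little careful in the self-adjoint case. The cleanest uniform device is: the net $(x_\alpha)$ itself need not be monotone, but $x=\sup_\alpha\bigl(\inf_{\alpha\le\beta}x_\beta\bigr)$, and each $\inf_{\alpha\le\beta}x_\beta$ is the infimum in $(\Mcal,\preceq)$ of a nonempty subset $\{x_\beta:\alpha\le\beta\}$ of $S$; by Corollary~\ref{supinf1}(ii)--(iv) (resp.\ Corollary~\ref{supinf2}) this infimum again lies in $S$ (for $S=P(\Mcal)$ one needs nonemptiness, which holds; for $S=\Mcal_{sa},\Mcal_+,\Mcal_{pi}$ one needs nonemptiness too, which holds). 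Then $x$ is the supremum of the bounded net $\bigl(\inf_{\alpha\le\beta}x_\beta\bigr)_\alpha$ inside $S$, so applying the ``supremum'' halves of Corollaries~\ref{supinf1} and \ref{supinf2} (which require boundedness — available after the tail reduction) gives $x\in S$. This handles all four sets simultaneously, including $\Mcal_{pi}$.

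The main obstacle is the self-adjoint case, precisely because Lemma~\ref{smaller element} fails there (as the $2\times2$ example in the Preliminaries shows): one cannot simply say ``$x_\alpha\preceq z_\alpha$ and $z_\alpha$ self-adjoint forces $x_\alpha$ self-adjoint.'' This is why the argument must route through \emph{suprema and infima of the $x_\alpha$'s themselves} rather than through the comparison nets $(y_\alpha),(z_\alpha)$, invoking Corollary~\ref{supinf1}(iii), whose proof (via the involution preserving $\preceq$ and the $L_M$ trick) is exactly what makes the self-adjoint part go through. The remaining cases are then routine: for $P(\Mcal)$ use Corollary~\ref{supinf1}(ii), for $\Mcal_+$ use Corollary~\ref{supinf1}(iv), and for $\Mcal_{pi}$ use Corollary~\ref{supinf2}, together with the tail/boundedness reduction from Lemma~\ref{limsup}. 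One should double-check the edge case of the empty index tail, but directedness of $\Gamma$ makes every tail nonempty, so no issue arises.
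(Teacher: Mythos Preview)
Your argument is correct. After passing to a bounded tail (Lemma~\ref{limsup}(i)) you may indeed write $x=\sup_{\alpha}\inf_{\alpha\le\beta}x_\beta$ in the Dedekind complete poset $(\Mcal,\preceq)$ (Lemma~\ref{limsup}(iii) together with Corollary~\ref{supinf1}(i)). Each inner infimum is the infimum of a nonempty subset of $S\in\{P(\Mcal),\Mcal_{sa},\Mcal_+,\Mcal_{pi}\}$ and hence lies in $S$ by Corollary~\ref{supinf1}(ii)--(iv) or Corollary~\ref{supinf2}; the resulting set of infima is bounded above (by the upper bound $z_{\alpha_0}$ of the tail, via $\inf_{\alpha\le\beta}x_\beta\preceq x_\alpha\preceq z_{\alpha_0}$), so the outer supremum again lies in $S$ by the same corollaries. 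This closes all four cases uniformly.

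The paper proceeds differently for three of the four sets. For $P(\Mcal)$, $\Mcal_+$, and $\Mcal_{sa}$ it simply invokes Proposition~\ref{sot}: order convergence implies strong operator convergence, and these three sets are strongly closed, so they are $\tau_o(\Mcal,\preceq)$-closed in one line. Only for $\Mcal_{pi}$ (which is not strongly closed) does the paper use an order-theoretic argument, and there it takes a shortcut: from the witnessing net $y_\alpha\uparrow x$ with $y_\alpha\preceq x_\alpha$, Lemma~\ref{smaller element} forces each $y_\alpha$ to be a partial isometry, and then a single application of the supremum half of Corollary~\ref{supinf2} gives $x\in\Mcal_{pi}$---no $\liminf$ needed. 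Your route is purely order-theoretic and self-contained (it does not rely on the analytic Proposition~\ref{sot}), and it handles $\Mcal_{sa}$ without the ``smaller element'' property, which you correctly identified as the delicate case; the paper's route is shorter but leans on the strong-topology comparison already established.
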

\begin{proof}
As $P(\Mcal)$, $\Mcal_+$, and $\Mcal_{sa}$ are strongly operator closed, it follows from Proposition~\ref{sot} that they are closed in $\tau_o(\Mcal,\preceq)$.

Assume that $(x_\alpha)_{\alpha\in\Gamma}$ be a net of partial isometries such that $x_\alpha\stackrel{o}{\to} x\in\Mcal$ in $(\Mcal,\preceq)$. Then there is a net $(y_\alpha)_{\alpha\in\Gamma}$ satisfying $y_\alpha\preceq x_\alpha$ for all $\alpha\in\Gamma$ and $y_\alpha\uparrow x$. By Lemma~\ref{smaller element}, $(y_\alpha)_{\alpha\in\Gamma}$ is a net of partial isometries. According to Corollary~\ref{supinf2}, $x$ is a partial isometry in \Mcal{}. Thus $\Mcal_{pi}$ is closed in $\tau_o(\Mcal,\preceq)$.
\end{proof}

\begin{cor}\label{relationships}
Let \Mcal{} be a \vNa. Then 
	\begin{enumerate}
		\item $\tau_o(\Mcal,\preceq)|_{\Mcal_{pi}}=\tau_o(\Mcal_{pi},\preceq)$;
		\item $\tau_o(\Mcal,\preceq)|_{\Mcal_{sa}}=\tau_o(\Mcal_{sa},\preceq)$;
		\item $\tau_o(\Mcal,\preceq)|_{\Mcal_+}=\tau_o(\Mcal_{sa},\preceq)|_{\Mcal_+}=\tau_o(\Mcal_+,\preceq)$;
		\item $\tau_o(\Mcal,\preceq)|_{P(\Mcal)}=\tau_o(\Mcal_{sa},\preceq)|_{P(\Mcal)}=\tau_o(\Mcal_+,\preceq)|_{P(\Mcal)}=\tau_o(P(\Mcal),\preceq)$;
	\end{enumerate}
\end{cor}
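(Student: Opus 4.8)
The plan is to reduce everything to Proposition~\ref{Dedekind}, fed by the closure statements of the preceding proposition, the Dedekind completeness of $(\Mcal,\preceq)$ from Corollary~\ref{supinf1}(i), the supremum-stability clauses of Corollaries~\ref{supinf1} and~\ref{supinf2}, and the elementary transitivity of the relative topology: if $B\subseteq A\subseteq X$ and $\tau$ is a topology on $X$, then $(\tau|_A)|_B=\tau|_B$.

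First I would dispatch the four equalities in which the ambient poset is $(\Mcal,\preceq)$. This poset is Dedekind complete by Corollary~\ref{supinf1}(i), and the preceding proposition shows that $\Mcal_{pi}$, $\Mcal_{sa}$, $\Mcal_+$, and $P(\Mcal)$ are each closed in $\tau_o(\Mcal,\preceq)$. To invoke Proposition~\ref{Dedekind} it then remains, for each such set $P_0$, to check that the supremum in $(\Mcal,\preceq)$ of an arbitrary nonempty $M\subseteq P_0$ having an upper bound in $\Mcal$ lies back in $P_0$; since ``has an upper bound'' is precisely the boundedness hypothesis in those corollaries (indeed $0$ is the least element of $(\Mcal,\preceq)$, so ``bounded above'' and ``bounded'' coincide there), this is Corollary~\ref{supinf1}(ii) for $P_0=P(\Mcal)$, Corollary~\ref{supinf1}(iii) for $P_0=\Mcal_{sa}$, Corollary~\ref{supinf1}(iv) for $P_0=\Mcal_+$, and Corollary~\ref{supinf2} for $P_0=\Mcal_{pi}$. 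Proposition~\ref{Dedekind} then yields
$$
\tau_o(\Mcal,\preceq)|_{\Mcal_{pi}}=\tau_o(\Mcal_{pi},\preceq),\qquad
\tau_o(\Mcal,\preceq)|_{\Mcal_{sa}}=\tau_o(\Mcal_{sa},\preceq),
$$
$$
\tau_o(\Mcal,\preceq)|_{\Mcal_+}=\tau_o(\Mcal_+,\preceq),\qquad
\tau_o(\Mcal,\preceq)|_{P(\Mcal)}=\tau_o(P(\Mcal),\preceq),
$$
which is all of (i), all of (ii), and the leftmost equalities of (iii) and (iv).

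The ``mixed'' equalities I would then get by transitivity of restriction rather than a fresh appeal to Proposition~\ref{Dedekind}. Since $\Mcal_+\subseteq\Mcal_{sa}$, statement (ii) gives $\tau_o(\Mcal_{sa},\preceq)|_{\Mcal_+}=\bigl(\tau_o(\Mcal,\preceq)|_{\Mcal_{sa}}\bigr)|_{\Mcal_+}=\tau_o(\Mcal,\preceq)|_{\Mcal_+}$, completing (iii). For (iv), $P(\Mcal)\subseteq\Mcal_{sa}$ with (ii) gives $\tau_o(\Mcal_{sa},\preceq)|_{P(\Mcal)}=\tau_o(\Mcal,\preceq)|_{P(\Mcal)}$, and $P(\Mcal)\subseteq\Mcal_+$ with (iii) gives $\tau_o(\Mcal_+,\preceq)|_{P(\Mcal)}=\tau_o(\Mcal,\preceq)|_{P(\Mcal)}$; chaining these with $\tau_o(\Mcal,\preceq)|_{P(\Mcal)}=\tau_o(P(\Mcal),\preceq)$ finishes (iv).

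I do not expect a genuine obstacle; the care needed is entirely bookkeeping: matching the correct supremum-stability clause of Corollary~\ref{supinf1} or Corollary~\ref{supinf2} to each ambient set, and, if one prefers to re-derive the mixed equalities directly via Proposition~\ref{Dedekind} with ambient $(\Mcal_{sa},\preceq)$ (which is Dedekind complete by the remark following Corollary~\ref{supinf1}), noting that $\Mcal_+$ is then closed in $\tau_o(\Mcal_{sa},\preceq)$ — this follows from (ii) and the closedness of $\Mcal_+$ in $\tau_o(\Mcal,\preceq)$ — and that the supremum of a $\preceq$-bounded subset of $\Mcal_+$ agrees whether computed in $(\Mcal_{sa},\preceq)$ or in $(\Mcal,\preceq)$, again by the remark following Corollary~\ref{supinf1}.
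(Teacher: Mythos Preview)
Your proposal is correct and follows essentially the same approach as the paper: the paper's proof is a one-line citation of Proposition~\ref{Dedekind}, Corollary~\ref{supinf1}, Corollary~\ref{supinf2}, and the preceding proposition, and you have accurately unpacked exactly how those pieces assemble. Your use of transitivity of restriction to obtain the mixed equalities in (iii) and (iv) is a clean shortcut that the paper leaves implicit.
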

	\begin{proof}
The statements (i)-(iv) follow directly from Proposition~\ref{Dedekind}, Corollary~\ref{supinf1}, Corollary~\ref{supinf2}, and the previous proposition.	
	\end{proof}


\section*{Acknowledgement}
This work was supported by  the ``Grant Agency of the Czech Republic" grant number 17-00941S, ``Topological and geometrical properties of Banach spaces and operator algebras II". 



\begin{thebibliography}{99}

\bibitem{ACMS10} J. Antezana, C. Cano, I. Musconi, and D. Stojanoff: {\em A note on the star order in Hilbert spaces,} Linear Multilinear Algebra {\bf 58} (2010), 1037-1051.

\bibitem{BHLL04} J. K. Baksalary, J. Hauke, X. Liu, and S. Liu: {\em Relationships between partial orders of matrices and their powers}, Linear Algebra Appl. {\bf 379} (2004), 277--287. 

\bibitem{Bi35} G. Birkhoff: {\em On the structure of abstract algebras}, Proc.  Camb. Philos. Soc. {\bf 31} (1935), 433--454. 

\bibitem{Bi67} G. Birkhoff: {\em Lattice Theory}, American Mathematical Society, New York (1967). 

\bibitem{Bo11} M. Bohata: {\em Star order on operator and function algebras}, Publ. Math. Debrecen {\bf 79} (2011), 211--229. 

\bibitem{BChW12} 
D. Buhagiar, E. Chetcuti, and H. Weber: {\it The order topology on the projection lattice of a Hilbert space}, Topol. Appl. {\bf 159} (2012), 2280--2289.

\bibitem{ChHW15} 
E. Chetcuti, J. Hamhalter, and H. Weber: {\it The order topology for a \vNa{}}, Studia Math. {\bf 230} (2015), 95--120.

\bibitem{Ci15} 
J. C\={\i}rulis: {\it Lattice operations on Rickart *-rings under the star order}, Linear Multilinear Algebra {\bf 63} (2015), 497--508.

\bibitem{Dr78}
M. P. Drazin: \textit{Natural structures on semigroups with involution}, Bull. Amer. Math. Soc. {\bf 84} (1978), 139--141.

\bibitem{ER95} 
M. Ern\'e and Z. Rie\v{c}anov\'a: {\it Order-topological complete orthomodular lattices}, Topol. Appl. {\bf 61} (1995), 215--227.

\bibitem{Ex17} 
R. Exel: {\it Partial Dynamical Systems, Fell bundles and Applications}, Amer. Math. Soc., Providence, 2017.

\bibitem{FK54} 
E. E. Floyd and V.L. Klee: {\it A characterization of reflexivity by the lattice of closed subspaces}, Proc. Amer. Math. Soc. {\bf 5} (1954), 655--661.

\bibitem{Gi76} 
A. R. Gingras: \textit{Convergence lattices}, Rocky Mountain J. Math. {\bf 6} (1976), 85--104.

\bibitem{Gu06} 
S. Gudder: \textit{An order for quantum observables}, Math. Slovaca {\bf 56} (2006), 573--589.

\bibitem{HL63} 
P. R. Halmos and J. E. McLaughlin: \textit{Partial isometries}, Pacific J. Math. {\bf 13} (1963), 585--596.

\bibitem{Ja83} 
M. F. Janowitz: \textit{On the *-order for Rickart *-rings}, Algebra Universalis {\bf 16} (1983), 360--369.

\bibitem{Ka97I} 
R. V. Kadison and J. R. Ringrose: \textit{Fundamentals of the Theory of Operator Algebras I}, Amer. Math. Soc., Providence, 1997. 

\bibitem{Ka97II} 
R. V. Kadison and J. R. Ringrose: \textit{Fundamentals of the Theory of Operator Algebras II}, Amer. Math. Soc., Providence, 1997. 

\bibitem{Ka91} 
R. V. Kadison and J. R. Ringrose: \textit{Fundamentals of the Theory of Operator Algebras III}, Amer. Math. Soc., Providence, 1991. 

\bibitem{Pa95} 
V. Palko: \textit{The weak convergence of unit vectors to zero in Hilbert space is the convergence of one-dimensional subspaces in the order topology}, Proc. Amer. Math. Soc. {\bf 123} (1995), 715--721. 

\bibitem{PV07} 
S. Pulmannov\'a and E. Vincekov\'a: \textit{Remarks on the order for quantum observables}, Math. Slovaca {\bf 57} (2007), 589--600.

\end{thebibliography}
\end{document}